\newtheorem{thm}{Theorem}
\newtheorem{lem}{Lemma}[section]
\newtheorem{cor}{Corollary}[section]
\newtheorem{prop}[lem]{Proposition}
\newtheorem*{thm*}{Theorem}
\theoremstyle{definition}
\theoremstyle{remark}
\newtheorem{rem}{Remark}[section]
\numberwithin{equation}{section}
\newcommand{\norm}[1]{\left\Vert#1\right\Vert}
\newcommand{\T}{\mathrm{T}}
\newcommand{\pd}[2]{\frac{\partial #1}{\partial #2}}
\newcommand{\vphi}{{\varphi}}
\newcommand{\calB}{\mathcal{B}}
\newcommand{\calC}{\mathcal{C}}
\newcommand{\calL}{\mathcal{L}}
\newcommand{\calM}{\mathcal{M}}
\newcommand{\calP}{\mathcal{P}}
\newcommand{\calT}{\mathcal{T}}
\newcommand{\bbZ}{\mathbb{Z}}
\newcommand{\bbR}{\mathbb R}
\newcommand{\bbC}{\mathbb C}
\newcommand{\bbN}{\mathbb N}
\newcommand{\bbH}{\mathbb{H}}
\newcommand{\Tr}{ \mbox{Tr}}
\newcommand{\SL}{ \mathrm{SL}}
\newcommand{\PSL}{ \mathrm{PSL}}
\newcommand{\PGL}{ \mathrm{PGL}}
\newcommand{\link}{\mathrm{link}}
\newcommand{\lk}{\mathrm{link}}
\newcommand{\Alt}{\mathrm{Alt}}
\newcommand{\bs}{\backslash}
\newcommand{\id}{1\!\!1}
\renewcommand{\Im}{\mathrm{Im}}
\newcommand{\bfa}{\mathbf{a}}
\begin{document}
\title[Linking numbers of modular knots]
{Quadratic irrationals and linking numbers of modular knots}%
\author{Dubi Kelmer}%
\address{Boston College, Department of Mathematics, Chestnut Hill, MA 02467}
\email{kelmer@bc.edu}

\thanks{This work was partially supported by NSF grant DMS-1237412.}%
\subjclass{}%
\keywords{}%

\date{\today}%
\dedicatory{}%
\commby{}%

\begin{abstract}
A closed geodesic on the modular surface gives rise to a knot on the 3-sphere with a trefoil knot removed, and one can compute the linking number of such a knot with the trefoil knot. We show that, when ordered by their length, the set of closed geodesics having a prescribed linking number become equidistributed on average with respect to the Liouville measure. We show this by using the thermodynamic formalism to prove an equidistribution result for a corresponding set of quadratic irrationals on the unit interval.
\end{abstract}

\maketitle
\section{Introduction}
Let $\calM=\PSL_2(\bbZ)\bs \bbH$ denote the modular surface; here $\bbH$
denotes the upper half plane endowed with the hyperbolic metric, and $\PSL_2(\bbZ)$ acts by isometries on $\bbH$ via linear fractional transformation. The closed geodesics are the periodic orbits for the geodesic flow on the unit tangent bundle $T^1\calM$. Following the results of Ghys \cite{Ghys07} and Sarnak \cite{Sarnak10link} on the linking numbers of modular knots, we continue the study of the set of primitive closed geodesics having a prescribed linking number, and in particular their distribution on $T^1\calM$.

In \cite{Ghys07}, Ghys showed that $T^1\calM$ is homeomorphic to the three-sphere without a trefoil knot, and hence, a closed geodesic can be thought of as a knot in this space. He then computed the linking number of a such a knot with the trefoil knot in terms of a certain arithmetic invariant. If $\gamma$ denotes an oriented primitive closed geodesic in $T^1\calM$, we will denote this linking number by $\lk(\gamma)$ and we refer to it as the linking number of this primitive geodesic.

Motivated by this result, Sarnak \cite{Sarnak10link} indicates how a careful analysis of the Selberg trace formula for modular forms of real weights can be used to study the number of primitive closed geodesics with a prescribed linking number and bounded length (see also \cite{Mozzochi10}). In particular, if $\calC_n(T)$
denotes the set of primitive closed geodesics, $\gamma$, with linking number $\lk(\gamma)=n$ and length $\ell(\gamma)\leq T$, then his analysis implies that $|\calC_n(T)|\sim \frac{e^T}{3T^2}$ (that is $|\calC_n(T)|/\frac{e^T}{3T^2}\to 1$ as $T\to \infty$).
For comparison we recall that the \emph{prime geodesic theorem} states that $|\calC(T)|\sim \frac{e^T}{T}$, where $\calC(T)$ is the set of primitive closed geodesics with $\ell(\gamma)\leq T$.

In order to study the average distribution of these closed geodesics we adapt the approach of Pollicott \cite{Pollicott86}, using the correspondence between closed geodesics and quadratic irrationals together with the thermodynamic formalism for the Gauss map.

The Gauss map $\mathrm{T}:(0,1)\to [0,1)$ is given by $\mathrm{T}(x)=\{\frac{1}{x}\}$ where
where $\{\cdot\}$ denotes the fractional part. The iteration of this map gives a dynamical system whose periodic orbits are the quadratic irrationals with a periodic continued fractions, that is, the set
\[Q=\{x=[\overline{a_1,\ldots a_n}]=\tfrac{1}{a_1+\frac{1}{a_2+\ldots\frac{1}{a_n+\frac{1}{a_1+\ldots}}}}|n\in \bbN,\; a_1,\ldots a_n\in \bbN\}.\]

There is a well known correspondence, going back to Artin \cite{Artin24} and beautifully explained by Series \cite{Series85}, between the periodic orbits of the Gauss map and the primitive closed geodesics on the modular surface. This correspondence gives a natural ordering of the quadratic irrationals, that is, for $x\in Q$ we let $\ell(x)$ denote the length of the corresponding primitive closed geodesic and we order them according to this length.

Using Mayer's \cite{Mayer76} thermodynamic formalism for the Gauss map, in \cite{Pollicott86} Pollicott showed that, with this ordering, the quadratic irrationals become equidistributed on $(0,1)$ with respect to the Gauss measure given by $d\nu=\frac{1}{\log(2)}\frac{dx}{1+x}$. Then, using the correspondence between the Gauss map and the geodesic flow, he deduced the average equidistribution of the full set of closed geodesics when ordered by length.

\begin{rem}
There is another natural, and well known, correspondence between primitive closed geodesics on the modular surface and equivalence classes of binary quadratic forms (see \cite{Sarnak82}). Instead of ordering the geodesics by length one can order them according to their discriminant (that is, the discriminant of the corresponding quadratic form). In this setting,  Duke's theorem \cite{Duke88} shows that the set of closed geodesics of a given discriminant also become equidistributed in $T^1\calM$ as the discriminant goes to infinity. Since closed geodesics with the same discriminant also have the same length, Duke's theorem also implies that the set of closed geodesics with a fixed length become equidistributed when the length goes to infinity, which also implies the average equidistribution.
\end{rem}

The linking number of a closed geodesic can be computed (up to a sign) from the corresponding continued fraction expansion.
Specifically,
we will show below that if $x\in Q$ has an even minimal periodic expansion $x=[\overline{a_1,\ldots,a_{2n}}]$, then its $\T$-orbit corresponds to two primitive geodesics $\gamma,\bar\gamma$  (related by orientation reversal symmetry) having the same length
\[\ell(\gamma)=\ell(\bar\gamma)=-2\sum_{j=1}^{2n}\log(\T^jx):=\ell(x),\]
and opposite linking numbers given by
\[\lk(\gamma)=-\lk(\bar\gamma)=-a_1+a_2-\ldots+a_{2n}.\]
On the other hand, if $x\in Q$ has an odd minimal expansion $x=[\overline{a_1,\ldots,a_{n}}]$ (with $n$ odd), then it corresponds to a single primitive geodesic (invariant under orientation reversal symmetry) having length
\[\ell(\gamma)=-4\sum_{j=1}^{n}\log(\T^jx):=\ell(x),\]
and linking number zero.

To any quadratic irrational $x\in Q$ with a minimal even expansion $x=[\overline{a_1,\ldots,a_{2n}}]$ we attach the alternating sum
\begin{equation}\label{e:Alt}\mathrm{Alt}(x)=-a_1+a_2-\ldots+a_{2n}.\end{equation}
Combining the dynamical approach of \cite{Pollicott86} with the results from \cite{Sarnak10link} we show that the quadratic irrationals with a given alternating sum (up to a sign) become equidistributed on average with respect to the Gauss measure. Specifically, for any $n\in \bbZ$ let
\begin{equation}\label{e:Qn} Q_n(T)=\{x\in Q| \mathrm{Alt}(x)=n,\;\ell(x)\leq T\}\end{equation}
\begin{thm}\label{t:Qn}
$|Q_n(T)|\sim \frac{\log(2)}{\pi^2}\frac{e^T}{T}$ and
for any $f\in C([0,1])$
\begin{equation}\label{e:equiQn}\lim_{T\to\infty}\frac{1}{|Q_n^+(T)|}\sum_{x\in Q_n^+(T)}f(x)=\int_0^1 f(x)d\nu(x),\end{equation}
where $Q_n^+(T)=Q_n\cup Q_{-n}$.
\end{thm}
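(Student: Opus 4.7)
My approach adapts Pollicott's thermodynamic-formalism proof \cite{Pollicott86} of the (unweighted) equidistribution of quadratic irrationals, by replacing the scalar Mayer transfer operator with a character twist that tracks $\mathrm{Alt}$. Since $\mathrm{Alt}$ is $\T^2$-invariant but flips sign under $\T$, the natural setting is the second iterate $\T^2$. Writing $\phi(y)=a_2(y)-a_1(y)$ and $r(y)=-2\log y-2\log(\T y)$, for $x\in Q$ of even $\T$-period $2m$ one has
\begin{equation*}
\mathrm{Alt}(x)=\sum_{k=0}^{m-1}\phi(\T^{2k}x) \quad\text{and}\quad \ell(x)=\sum_{k=0}^{m-1}r(\T^{2k}x),
\end{equation*}
so both are Birkhoff sums for $\T^2$. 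This motivates the twisted Mayer operator
\begin{equation*}
\calL_{s,\theta}f(x)=\sum_{a,b\geq 1}\frac{e^{i\theta(b-a)}}{(a(b+x)+1)^{2s}}\,f\!\left(\frac{b+x}{a(b+x)+1}\right),
\end{equation*}
acting on Mayer's Banach space of holomorphic functions on a disc containing $[0,1]$.

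\textbf{Spectral analysis.} For $\theta=0$, $\calL_{s,0}$ is essentially the square of Mayer's classical operator \cite{Mayer76}, which is nuclear of order zero and has a simple leading eigenvalue equal to $1$ at $s=1$, with eigenfunction $h_0(x)=(1+x)^{-1}$ (proportional to the Gauss density). Analytic perturbation yields a simple isolated leading eigenvalue $\lambda(s,\theta)$ of $\calL_{s,\theta}$ for $(s,\theta)$ in a neighborhood of $(1,0)$, with $\lambda(1,0)=1$; the $a\leftrightarrow b$ symmetry gives $\lambda(s,\theta)=\lambda(s,-\theta)$. Aperiodicity of $\phi$ on $\Z$ then yields $|\lambda(1,\theta)|<1$ strictly for $\theta\in[-\pi,\pi]\setminus\{0\}$.

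\textbf{From spectrum to theorem.} I will use the trace identity for $\calL_{s,\theta}^m$ together with a Ruelle-type dynamical zeta function to encode the weighted generating series $Z_f(s,\theta)=\sum_{x}e^{-s\ell(x)+i\theta\mathrm{Alt}(x)}f(x)$ (sum over even-period elements of $Q$) as a meromorphic function of $s$ whose leading singularity is at $s=s(\theta)$ defined by $\lambda(s(\theta),\theta)=1$. Fourier inversion $\frac{1}{2\pi}\int_{-\pi}^{\pi}e^{-in\theta}Z_f(s,\theta)\,d\theta$ isolates the $\mathrm{Alt}=n$ contribution; combined with a Tauberian argument this produces $|Q_n(T)|\sim (\log 2/\pi^2)e^T/T$. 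The equidistribution in \eqref{e:equiQn} follows because the leading spectral projection of $\calL_{1,0}$ sends every $f\in C([0,1])$ to $\bigl(\int f\,d\nu\bigr)h_0$, so the ratio of the $f$-weighted and unweighted asymptotics equals $\int f\,d\nu$.

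\textbf{Main obstacle.} The principal technical point is the local behavior of $\lambda(s,\theta)$ at $(1,0)$. At $s=1$ the naive derivative $\partial_\theta\calL_{s,\theta}|_{\theta=0}$ involves the divergent sum $\sum_{a,b}(b-a)/(a(b+x)+1)^2$, a reflection of the heavy tails $\nu\{a=k\}\sim(k^2\log 2)^{-1}$ of the partial quotients. The expansion is therefore not purely quadratic; identifying the correct singular behavior (expected to be of Cauchy-stable type, e.g.\ $\lambda(1,\theta)\approx 1-c|\theta|\log(1/|\theta|)$) is what produces the $1/T$ prefactor and the exact constant $\log(2)/\pi^2$. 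As a consistency check, Sarnak's count $|\calC_n(T)|\sim e^T/(3T^2)$ \cite{Sarnak10link}, combined with the Lyapunov exponent $\lambda_{\mathrm{Lyap}}=\pi^2/(6\log 2)$ of $\T$ and Abel summation over $\T$-orbits (of period $\approx\ell/\lambda_{\mathrm{Lyap}}$), independently gives $|Q_n^+(T)|\sim (2\log 2/\pi^2)\cdot e^T/T$, matching the statement via $|Q_n(T)|=|Q_n^+(T)|/2$.
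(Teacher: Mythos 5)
Your overall architecture --- a $\theta$-twisted Mayer-type operator for $\T^2$ (equivalently the composition $\calL^{\theta}_{s,w}\calL^{-\theta}_{s,w}$ of two oppositely twisted operators, which is exactly what the paper uses to track the alternating sign of $\Alt$ under $\T$), a Fredholm/dynamical zeta function, Fourier inversion in $\theta$ to isolate $\Alt=n$, and a Tauberian theorem --- coincides with the paper's. The problem is that the step you yourself flag as the ``main obstacle'' is precisely where the proof lives, and your proposal neither resolves it nor guesses its resolution correctly.

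The paper does \emph{not} determine the leading singularity near $(s,\theta)=(1,0)$ by local spectral analysis of the transfer operator. Because of the heavy tails of the partial quotients, the dependence on $\theta$ is only H\"older: Lemma \ref{l:modulus} gives an operator modulus of continuity $O(\sqrt{|\theta|})$, and perturbation theory is used only to control the \emph{residue}, $R_f(\theta)=R_f(0)+O(|\theta|^{1/2})$. The \emph{location} of the pole, $s_\theta=1-3|\theta|$ for $|\theta|<1/12$ (and the absence of poles in $\Re(s)>3/4$ for larger $|\theta|$), is imported wholesale from the Sarnak--Mozzochi trace-formula estimate \eqref{e:SM} for modular forms of real weight, by matching $\zeta_\theta(s)=\sum_\gamma e^{i\pi\theta\Psi(A_\gamma)}e^{-s\ell(\gamma)}$ against that asymptotic; the paper explicitly remarks that obtaining this input by a purely dynamical argument is an open problem. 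Your proposed Ansatz $\lambda(1,\theta)\approx 1-c|\theta|\log(1/|\theta|)$ is moreover the wrong one: $\Alt$ is a \emph{symmetric} Cauchy-type observable ($b-a$ rather than the one-sided $a_1$), so the correct behavior is purely linear in $|\theta|$, consistent with $s_\theta=1-3|\theta|$ with no logarithm. Getting this exactly right is not cosmetic: after Fourier inversion the singular part of the Laplace transform is $c\int R_f(\theta)e^{-i\pi n\theta}(s-1+3|\theta|)^{-2}\,d\theta$, and both the resulting simple pole at $s=1$ with residue $R_f(0)/3$ (which produces the constant $\log(2)/\pi^2$, the $e^T/T$ coming from removing the weight $\ell(x)$ by partial summation) and the $L^1$-convergence hypothesis of the Wiener--Ikehara theorem depend on the linear law for $s_\theta$ combined with the $O(\sqrt{|\theta|})$ continuity of $R_f(\theta)$ --- a point your proposal does not address at all. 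As written, the central analytic step is therefore unproved, and no substitute for the trace-formula input \eqref{e:SM} is supplied.
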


The set of quadratic irrationals $x\in Q$ with $\Alt(x)=0$ contains the set $Q_{\mathrm{odd}}$ of quadratic irrationals with an odd periodic expansion. We show that this (much smaller) set is also equidistributed.
\begin{thm}\label{t:Qodd}
Let $Q_{\mathrm{odd}}(T)$ denote the set of quadratic irrationals with an odd minimal expansion and $\ell(x)\leq T$.
Then $|Q_{\mathrm{odd}}(T)|\sim \frac{3\log(2)}{\pi^2}\frac{e^{T/2}}{T}$ and
for any $f\in C([0,1])$ we have
\begin{equation}\label{e:Qodd}\lim_{T\to\infty}\frac{1}{|Q_{\mathrm{odd}}(T)|}\sum_{x\in Q_{\mathrm{odd}}(T)}f(x)=\int_0^1 f(x)d\nu(x).\end{equation}
\end{thm}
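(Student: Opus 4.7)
I would follow the thermodynamic formalism of Pollicott \cite{Pollicott86}, adapted to isolate odd-period orbits via restriction to odd iterates of Mayer's transfer operator. The key dynamical observation is that for $n$ odd, every divisor of $n$ is odd, so $\mathrm{tr}(\mathcal L_s^n)$ receives contributions \emph{only} from primitive orbits of odd minimal period. Setting $\chi_\gamma := \log|(T^{n_\gamma})'(x_\gamma)|$ and using the length formulas given in the paper, one finds $\chi_\gamma = \ell_\gamma/2$ on odd-period orbits, so
\[
|Q_{\mathrm{odd}}(T)| = \sum_{\substack{\gamma\ \text{odd prim}\\ \chi_\gamma \le T/2}} n_\gamma.
\]

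Next I would introduce the $f$-twisted Mayer operator $\mathcal L_{s,f}g(x) = \sum_{k\ge 1}(k+x)^{-2s}\exp\!\bigl(f(\tfrac{1}{k+x})\bigr)g(\tfrac{1}{k+x})$ and form the odd zeta function
\[
\eta_{\mathrm{odd}}(s,f) := \sum_{n\ \text{odd}} \tfrac{1}{n}\mathrm{tr}(\mathcal L_{s,f}^n) \;=\; \tfrac{1}{2}\log\frac{\det(1+\mathcal L_{s,f})}{\det(1-\mathcal L_{s,f})}.
\]
By Mayer's \cite{Mayer76} nuclearity theorem and analytic perturbation theory, $\mathcal L_{s,f}$ admits a simple leading eigenvalue $\lambda_0(s,f)$ jointly analytic in $(s,f)$, with $\lambda_0(1,0)=1$, $\partial_s\lambda_0(1,0) = -\pi^2/(6\log 2)$, and $\partial_f\lambda_0(1,0)\cdot g = \int_0^1 g\,d\nu$. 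The simple zero of $\det(1-\mathcal L_s)$ at $s=1$ then forces $\eta_{\mathrm{odd}}(s,0) = -\tfrac{1}{2}\log(s-1)+O(1)$ as $s\to 1^+$, a logarithmic singularity.

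Unfolding $\mathrm{tr}(\mathcal L_{s,f}^n)$ via the sign-corrected trace formula (with denominator $1+|(T^n)'(x)|^{-1}$ for odd $n$) and summing over odd $n$, one obtains the orbit expansion
\[
\eta_{\mathrm{odd}}(s,f) \;=\; \sum_{\gamma\ \text{odd prim}} e^{S_{n_\gamma}f(x_\gamma)}\,e^{-s\chi_\gamma} + R(s,f),
\]
where $S_n f(x) := \sum_{j=0}^{n-1} f(T^j x)$ and $R$ is analytic in $s$ near $s=1$ (collecting the higher multiplicity $m\ge 3$ and shifted $s\mapsto s+k$ contributions). An Ikehara--Delange Tauberian theorem applied to this log-type singularity yields the leading primitive count; weighting by $n_\gamma$ (using the Bowen-style asymptotic $n_\gamma\sim \chi_\gamma/\lambda_*$ along periodic orbits) and substituting $T\mapsto T/2$ produces the claimed asymptotic $|Q_{\mathrm{odd}}(T)|\sim \frac{3\log 2}{\pi^2}\frac{e^{T/2}}{T}$, the constant coming from the residue computation at the simple eigenvalue. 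Repeating the argument with $f = tg$ and differentiating at $t=0$: the singularity shifts as $s(tg) = 1 - \lambda_*^{-1} t\int g\,d\nu + O(t^2)$, producing the Gauss-measure average in \eqref{e:Qodd}.

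The principal technical hurdle is ensuring analytic continuation of $\eta_{\mathrm{odd}}(s,f)$ into a neighborhood of $\mathrm{Re}\,s = 1$ with only the expected singularity at $s=1$, uniformly in $f$ near $0$, so that the Tauberian theorem applies cleanly. This rests on extending Mayer's uniform nuclearity estimates to the twisted operator by a standard perturbation argument, together with Sarnak's trace-formula input \cite{Sarnak10link} to rule out spurious singularities on the line $\mathrm{Re}\,s = 1$ coming from exceptional eigenvalues of the Laplacian on $\mathcal M$.
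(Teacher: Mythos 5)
Your overall architecture --- restricting Mayer's operator to odd powers, writing $\sum_{n\ \mathrm{odd}}\tfrac1n\mathrm{Tr}(\calL_{s,f}^n)$ as $\tfrac12\log\bigl(\det(1+\calL_{s,f})/\det(1-\calL_{s,f})\bigr)$, and extracting the asymptotics from the behaviour of the leading eigenvalue at $s=1$ --- is exactly the paper's. But there is a genuine gap at the step where you pass from counting \emph{orbits} to counting \emph{points} of $Q_{\mathrm{odd}}$. You invoke ``$n_\gamma\sim\chi_\gamma/\lambda_*$ along periodic orbits,'' and this is false orbit by orbit: the fixed point $[\overline{a}]$ has $n_\gamma=1$ while $\chi_\gamma\approx 2\log a$ is unbounded, and there is no pointwise proportionality between period and Lyapunov sum for the Gauss map. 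The statement is only true \emph{on average} over periodic orbits ordered by $\chi_\gamma$ --- but that average is itself an instance of the equidistribution being proved, so using it as an input is circular. Likewise, ``differentiating the shifted singularity $s(tg)$ at $t=0$'' interchanges a $t$-derivative with the Tauberian limit without justification.

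The paper's device repairs both problems at once, and you should adopt it: rather than applying a Delange-type theorem to the logarithmic singularity of $\log Z^{\mathrm{odd}}$, set $\eta_f^{\mathrm{odd}}(s)=-\partial_w\log Z^{\mathrm{odd}}_f(s,w)\big|_{w=0}$. The $w$-derivative pulls down the Birkhoff sum $\sum_{j}f(\T^jx)$ --- which for $f\equiv1$ equals the period $n$, i.e.\ exactly the multiplicity needed to count points of $Q_{\mathrm{odd}}$ rather than orbits --- and simultaneously converts the logarithmic singularity of $-\tfrac12\log\det(1-\calL_{s})$ into a genuine simple pole $\partial_w\lambda/(1-\lambda(s,0))$ at $s=1$, with residue $\partial_w\lambda_0(1,0)/\partial_s\lambda_0(1,0)=\tfrac{6\log2}{\pi^2}\int f\,d\nu$, so the ordinary Wiener--Ikehara theorem suffices. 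One further point: to guarantee that $s=1$ is the \emph{only} singularity in $\Re(s)>\tfrac12$ you must also rule out zeros of $\det(1+\calL_s)$, which would appear as poles of $Z^-/Z^+$; the paper does this via Mayer's factorization $Z(s)=Z^+(s)Z^-(s)$ of the Selberg zeta function together with its zero-free region (and notes that on the line $\Re(s)=1$ this can even be done purely dynamically), which is more precise than the generic ``trace-formula input'' you gesture at --- the estimate \eqref{e:SM} is needed for Theorem \ref{t:Qn}, not here.
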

\begin{rem}
We note that the reason that $Q_{\mathrm{odd}}(T)$ is much smaller than $Q_{\mathrm{even}}(T)$ is an artifact of our ordering.
In a slightly different ordering given by $\tilde{\ell}(x)=-2\sum_{j=1}^n\log(\T^jx)$
(which is more natural when considering only the minimal expansion $x=[\overline{a_1,\ldots,a_n}]$) we get that roughly half the points are odd, half are even, and each is equidistributed.
\end{rem}

These equidistribution results of quadratic irrationals imply the equidistribution of the corresponding sets of closed geodesics.
Given a test function $F\in C_c(T^1\calM)$ and a closed geodesic $\gamma$ we denote by
\[\int_\gamma F=\int_0^{\ell(\gamma)} F(\vphi_t(x))dt,\]
where $\vphi_t:T^1\calM\to T^1\calM$ is the geodesic flow and $x\in T^1\calM$ is any point on the geodesic.
%We recall that a set $\calC'$ of closed geodesics is said to be equidistributed in on average (when ordered by length) if for any $F\in C_c(T^1\calM)$ we have
%\[\lim_{T\to\infty}\frac{1}{|\calC'(T)|}\sum_{\gamma\in \calC'(T)} \frac{1}{\ell(\gamma)}\int_\gamma F=\int_{T^1\calM}Fd\mu,\]
%where $\calC'(T)=\{\gamma\in \calC'|\ell(\gamma)\leq T\}$.
%Let $\calC_n$ denote the set of primitive closed geodesics with linking number equal to $n$
%an let $\calC^+_n=\calC_n\cup \calC_{-n}$.
As a consequence of Theorem \ref{t:Qn} we get
\begin{thm}\label{t:main}
$|\calC_n(T)|\sim \frac{e^{T}}{3T^2}$ and
for any $F\in C_c(T^1\calM)$
\[\lim_{T\to \infty}\frac{1}{|\calC^+_{n}(T)|}\sum_{\gamma\in \calC^+_{n}(T)} \frac{1}{\ell(\gamma)}\int_\gamma F= \int_{T^1\calM}Fd\mu,\]
where $\calC_n^+(T)=\calC_n(T)\cup\calC_{-n}(T)$.
\end{thm}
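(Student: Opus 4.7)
The plan is to transport Theorem \ref{t:Qn} to the geodesic side via the Artin--Series symbolic coding, closely following Pollicott \cite{Pollicott86}. Recall that this coding realizes the geodesic flow on $T^1\calM$, modulo the orientation-reversing involution $\iota$, as the suspension of $\T$ by the roof function $r(x)=-2\log x$; under it, each $\T$-orbit of even minimal period $2k$ corresponds to an orientation-paired set $\{\gamma,\bar\gamma\}$ of primitive closed geodesics with $\ell(\gamma)=\ell(\bar\gamma)=\sum_{j=1}^{2k}r(\T^j x)$, and with the $\Alt$-values alternating in sign along the orbit, so that each such pair contributes exactly $2k$ elements to $Q_n^+$.

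For $F\in C_c(T^1\calM)$ I would first reduce to the case $F\circ\iota=F$ (the anti-invariant part contributes zero on both sides). The Series coding then yields a symbolic function $\tau_F$ satisfying the ergodic-sum identity $\int_\gamma F=\sum_{j=1}^{2k}\tau_F(\T^j x)$ for any representative $x$ of the orbit, together with the Abramov identity
\begin{equation*}
\int_0^1\tau_F\,d\nu=\Bigl(\int_0^1 r\,d\nu\Bigr)\int_{T^1\calM}F\,d\mu=\frac{\pi^2}{6\log 2}\int_{T^1\calM}F\,d\mu,
\end{equation*}
where $\mu$ is the normalized Liouville probability measure and $\int r\,d\nu=\pi^2/(6\log 2)$ is L\'evy's classical constant. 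Summing the ergodic-sum identity over the $\T$-orbit partition of $Q_n^+(T)$, and noting that each pair $\{\gamma,\bar\gamma\}$ arises as a single $\T$-orbit but as two elements of $\calC_n^+$, gives
\begin{equation*}
\sum_{\gamma\in\calC_n^+(T)}\int_\gamma F=2\sum_{x\in Q_n^+(T)}\tau_F(x).
\end{equation*}

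Applying Theorem \ref{t:Qn} together with the asymptotic $|Q_n^+(T)|\sim(2\log 2/\pi^2)(e^T/T)$ then yields
\begin{equation*}
\sum_{\gamma\in\calC_n^+(T)}\int_\gamma F\;\sim\;\frac{2e^T}{3T}\int_{T^1\calM}F\,d\mu.
\end{equation*}
Specializing to $F\equiv 1$ (via a compact-exhaustion argument) produces the total-length asymptotic $\sum_{\gamma\in\calC_n^+(T)}\ell(\gamma)\sim 2e^T/(3T)$, and a routine Abel/partial-summation argument then converts this into the cardinality asymptotic $|\calC_n(T)|\sim e^T/(3T^2)$. Applying the same Abel summation to $\sum_\gamma\ell(\gamma)^{-1}\int_\gamma F$, and using the asymptotic for $\sum_\gamma\int_\gamma F$ together with the fact that contributions are concentrated near $\ell(\gamma)\approx T$, produces $\sum_\gamma\ell(\gamma)^{-1}\int_\gamma F\sim|\calC_n^+(T)|\int F\,d\mu$, which is the desired equidistribution.

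The main technical obstacle is the one-dimensional reduction: $\tau_F$ as produced by the Series coding is a function on the two-dimensional natural extension (depending on both past and future), while Theorem \ref{t:Qn} only handles functions in $C([0,1])$. To close this gap I would first reduce to $F$ supported in a single coding cell and approximate $\tau_F$ in $L^\infty$ by a continuous function of the future coordinate alone, exploiting the exponential contraction of $\T^{-1}$ on the past direction; a density argument then extends the equidistribution to all $F\in C_c(T^1\calM)$.
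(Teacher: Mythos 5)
Your outline is essentially the paper's own route: the result is deduced from Theorem \ref{t:Qn} via the Series coding, with the conversion from the length-weighted average to the $1/\ell(\gamma)$-weighted average done by partial summation exactly as in Proposition \ref{p:equi} (your constants $\frac{\pi^2}{6\log 2}$ and $\frac{2e^T}{3T}$ match the paper's $\Lambda_{\calC'}(T)\sim\frac{\pi^2}{3\log 2}|Q'(T)|$). The reduction to orientation-invariant $F$, the Abramov-type normalization, and the Abel summation are all correct as sketched.

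The one step you should restate is the two-dimensional reduction, which you rightly identify as the main obstacle but resolve with a claim that is false as written: $\tau_F$ (equivalently the paper's $f_F(x,y)$) genuinely depends on the past coordinate $y$, so it cannot be approximated in $L^\infty$ by a continuous function of $x$ alone. What saves the argument is that you are only ever summing $\tau_F$ over \emph{complete} periodic $\hat\T$-orbits, and such sums are invariant under replacing $\tau_F$ by $\tau_F\circ\hat\T^N$; after composing with a high power of the shift the dependence on the deep past is exponentially small, and \emph{then} the $L^\infty$ approximation by a function of the future coordinate is valid. This shift-invariance of complete-orbit sums is precisely the content of the paper's Lemma \ref{l:equi}, which carries out the reduction exactly (rather than approximately) for indicator functions of product cylinders $I_{\mathbf{k}}\times I_{\mathbf{k}'}$ and concludes by density, showing that one-dimensional equidistribution of $Q'(T)$ with respect to $\nu$ is equivalent to two-dimensional equidistribution of the pairs $(x,-1/\bar{x})$ with respect to $\tilde\nu$. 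With that correction your argument goes through; you should also note that the case $F\equiv 1$ (needed for $\Lambda_{\calC_n^+}(T)$ and hence for the cardinality asymptotic) involves the unbounded roof function $-2\log x$, so the truncation there must use its $\nu$-integrability rather than a bare compact-exhaustion of test functions.
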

\begin{rem}
It is likely that the same equidistribution result holds for the smaller set $\calC_n(T)$ instead of $\calC_n^+(T)$.
Note that the time reversal of a primitive geodesic has the same length and an opposite linking number. Since a closed geodesic and its time reversal have the same projection to the base manifold $\calM$, our result implies that the closed geodesics in $\calC_n(T)$ at least become equidistributed on the base manifold.
\end{rem}
\begin{rem}
In addition to the thermodynamic formalism, another crucial ingredient in the proof of Theorem \ref{t:Qn} (and hence also Theorem \ref{t:main}) is the estimate  \eqref{e:SM} proved in \cite{Mozzochi10,Sarnak10link} using the trace formula for modular forms of real weight. Consequently, our result does not give an independent proof for the asymptotics $|\calC_n(T)|\sim \frac{e^{T}}{3T^2}$. It would be interesting if one could obtain such a result using an entirely dynamical approach.
\end{rem}

\begin{rem}
 One should compare Theorem \ref{t:main} to analogous results on the counting and equidistribution of closed geodesics on a compact hyperbolic surface lying in a prescribed homology class. In this case, Katsuda and Sunada \cite{KatsudaSunada90}, Lalley \cite{Lalley89}, and Pollicott \cite{Pollicott91} obtained similar results using an entirely dynamical approach (which works also in variable negative curvature). The asymptotics for the number of closed geodesics in a fixed homology class were previously obtained by Phillips and Sarnak \cite{PhillipsSarnak87} using the Selberg trace formula (for compact hyperbolic surfaces). In \cite{Zelditch89Trace} Zelditch generalized the trace formula to give another proof of the equidistribution of closed geodesics in a fixed homology class (with explicit bounds on the rate of equidistribution). It should be possible to further generalize Zelditch's approach to give another proof for the equidistribution of closed geodesics with a prescribed linking number (such an approach would also give bounds for the rate of equidistribution).
\end{rem}

Let $\calC_i(T)\subset\calC_0(T)$ denote the subset of \emph{inert} geodesics, that is, the primitive geodesics that are left invariant under orientation reversal symmetry. From Theorem \ref{t:Qodd} we get
\begin{thm}\label{t:inert}
$|\calC_i(T)|\sim \frac{e^{T/2}}{T}$ and for any $F\in C_c(T^1\calM)$
\[\lim_{T\to\infty}\frac{1}{|\calC_{i}(T)|}\sum_{\gamma\in \calC_{i}(T)} \frac{1}{\ell(\gamma)}\int_\gamma F= \int_{T^1\calM}Fd\mu.\]
\end{thm}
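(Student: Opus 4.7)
The plan is to derive Theorem \ref{t:inert} from the equidistribution of $Q_{\mathrm{odd}}(T)$ given in Theorem \ref{t:Qodd}, via the Artin--Series symbolic coding of the geodesic flow, in exact parallel with the derivation of Theorem \ref{t:main} from Theorem \ref{t:Qn}. The starting observation is the clean bijection between $\calC_i$ and the $\T$-orbits in $Q_{\mathrm{odd}}$: each inert primitive geodesic $\gamma$ is the image of one $\T$-orbit $\{x,\T x,\ldots,\T^{n(\gamma)-1}x\}$ of odd minimal period $n(\gamma)$ under the map from the statements preceding \eqref{e:Alt}, with $\ell(\gamma)=\ell(x)$, so that in particular
\[|Q_{\mathrm{odd}}(T)|=\sum_{\gamma\in \calC_i(T)}n(\gamma).\]

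Next I would use the representation of $\vphi_t$ on $T^1\calM$ as a suspension over (a natural extension of) the Gauss map with roof function $r(x)\sim -\log x$. For $F\in C_c(T^1\calM)$ this produces a roof-integrated function $\tilde F(x)=\int_0^{r(x)} F(\vphi_t\sigma(x))\,dt$ on $(0,1)$ satisfying $\int_\gamma F=\sum_{x\in \mathrm{orbit}(\gamma)}\tilde F(x)$ for every primitive closed geodesic, and such that $\int_0^1 \tilde F\,d\nu$ is a fixed multiple of $\int_{T^1\calM}F\,d\mu$. Interchanging the order of summation gives
\[\sum_{\gamma\in \calC_i(T)}\int_\gamma F\;=\;\sum_{x\in Q_{\mathrm{odd}}(T)}\tilde F(x).\]
From here the argument splits into two steps: (i) apply Theorem \ref{t:Qodd} (in a suitably truncated form) to conclude that $\sum_{x\in Q_{\mathrm{odd}}(T)}\tilde F(x)\sim|Q_{\mathrm{odd}}(T)|\int_0^1\tilde F\,d\nu$; and (ii) a standard length-decomposition step that replaces the weight $\frac{1}{\ell(\gamma)}$ by $\frac{1}{T}$ up to a $1+o(1)$ factor, using that in any asymptotic of the expected form $|\calC_i(T)|\sim Ce^{T/2}/T$ almost every geodesic in $\calC_i(T)$ has $\ell(\gamma)$ close to $T$. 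Combined with the explicit asymptotic for $|Q_{\mathrm{odd}}(T)|$ from Theorem \ref{t:Qodd} and the suspension normalization, this yields both the equidistribution and, by specializing to a family of $F$ approximating the constant $1$, the counting asymptotic for $|\calC_i(T)|$.

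The main technical obstacle is the cusp behavior. The roof function blows up like $-\log x$ as $x\to 0^+$, so $\tilde F$ is not in $C([0,1])$ and Theorem \ref{t:Qodd} does not apply directly. The remedy is the usual one: truncate $r$ at a large cutoff $R$, apply equidistribution to the resulting bounded test function, and control the remainder coming from $x\in Q_{\mathrm{odd}}(T)$ lying in a small neighborhood of $0$ (equivalently, with a large leading continued-fraction digit). The needed uniform tail estimate on $\#\{x\in Q_{\mathrm{odd}}(T):x<\eps\}$ is accessible from the same thermodynamic-formalism machinery used to prove Theorem \ref{t:Qodd}. A secondary but routine issue is the length-decomposition step: once the counting function $|\calC_i(T)|$ is itself controlled to the level needed (via the Tauberian step of the proof of Theorem \ref{t:Qodd}), this passage is standard.
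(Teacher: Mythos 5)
Your overall strategy coincides with the paper's: Theorem \ref{t:inert} is deduced from Theorem \ref{t:Qodd} exactly as Theorem \ref{t:main} is deduced from Theorem \ref{t:Qn}, namely via the general reduction of Proposition \ref{p:equi} applied to $\calC'=\calC_i$, $Q'=Q_{\mathrm{odd}}$. Your step (ii) is the summation-by-parts passage from \eqref{e:equiC} to \eqref{e:equiC2} there, and the a priori bounds $T^\beta e^{cT}\lesssim|\calC_i(T)|\lesssim T^\alpha e^{cT}$ it requires do come out of $|Q_{\mathrm{odd}}(T)|\sim\frac{3\log 2}{\pi^2}e^{T/2}/T$ together with $\Lambda_{\calC_i}(T)=\sum_{\gamma\in\calC_i(T)}\ell(\gamma)\sim\frac{\pi^2}{3\log 2}|Q_{\mathrm{odd}}(T)|$. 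The cusp issue you single out is real but milder than you suggest: for $F\in C_c(T^1\calM)$ the roof-integrated test function is bounded, since $F$ vanishes along the cusp excursions; an unbounded test function only enters for $F=1$, which is exactly what is needed to relate $|Q_{\mathrm{odd}}(T)|$ to $\Lambda_{\calC_i}(T)$ and hence to $|\calC_i(T)|$.

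There is, however, one genuine gap. The geodesic flow is a suspension over the \emph{natural extension} $\hat{\T}$ on $(0,1]\times(0,1)\times\bbZ/2\bbZ$, not over $\T$ itself: the cross-section point attached to a periodic point $x$ is $p_0(x,-1/\bar{x},e)$, so the roof-integrated function is a function $f_F(x,y)$ of two variables and the orbit sum is $\sum_{x\in Q_\gamma}f_F(x,-1/\bar{x})$. There is no continuous function $\tilde F$ of the forward coordinate alone with $\int_\gamma F=\sum_{x}\tilde F(x)$: the backward coordinate $-1/\bar{x}=[\overline{a_n,\ldots,a_1}]$ reverses the continued-fraction expansion and is an everywhere-discontinuous function of $x=[\overline{a_1,\ldots,a_n}]$. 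Consequently Theorem \ref{t:Qodd}, stated for $f\in C([0,1])$, cannot be applied to your $\tilde F$ as written. What is missing is the upgrade from one-sided to two-sided equidistribution: that \eqref{e:Qodd} implies $\frac{1}{|Q_{\mathrm{odd}}(T)|}\sum_{x\in Q_{\mathrm{odd}}(T)}f(x,-1/\bar{x})\to\int f\,d\tilde{\nu}$ for all $f\in C([0,1]^2)$. This is Lemma \ref{l:equi} in the paper, and its proof is not formal: it uses that $Q_{\mathrm{odd}}$ is a union of complete $\T$-orbits, so that applying the shift $\hat{\T}^{n'}$ converts a cylinder test function $\id_{I_{\mathbf{k}}\times I_{\mathbf{k}'}}$ into one of the form $\id_{I_{\mathbf{k}''}\times[0,1]}$ (a function of $x$ alone), together with the $\hat{\T}$-invariance of $\tilde{\nu}$. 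With this lemma inserted between the coding and the application of Theorem \ref{t:Qodd}, the rest of your argument goes through as described.
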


\begin{rem}
The result in Theorem \ref{t:inert} is not new. In \cite{Sarnak07}, using the Selberg trace formula for $\PGL_2(\bbZ)$, Sarnak showed that $|\calC_i(T)|\sim \frac{e^{T/2}}{T}$. He also showed there, that if a primitive geodesic is inert, then all other primitive geodesics with the same discriminant are also inert. Consequently, the average equidistribution of the inert geodesics already follows from Duke's Theorem. We note however that the proof we give here is entirely dynamical and does not rely on the Selberg trace formula nor on Duke's theorem.
\end{rem}

\section{Background and notation}
We write $A\lesssim B$ (or $A=O(B)$) to indicate that $A\leq cB$ (or $|A|\leq c|B|$)
for some constant $c$. We use the notation $A(T)\sim B(T)$ to indicate that $A(T)/B(T)\to 1$ as $T\to \infty$.

\subsection{The modular surface}
We denote by $\bbH=\{z=x+iy:y>0\}$
the upper half plane. With the identification $T_z\bbH\cong\bbC$, the hyperbolic metric on $T_z\bbH$ is given by $\langle \xi,\eta\rangle_z=\frac{\Re(\xi\bar{\eta})}{y^2}$, and the unit tangent bundle is $T^1\bbH=\{(z,\xi)\in \bbH\times \bbC :|\xi|=\Im(z)\}$.
In this model the geodesics are either semi-circles orthogonal to the real line or vertical lines.

The group $\PSL_2(\bbR)$ acts on the hyperbolic plane by isometries via linear fractional transformations, that is, $g=\left(\begin{smallmatrix}a & b\\ c & d \end{smallmatrix} \right)$ sends $(z,\xi)\in T^1\bbH$ to
$$(g(z),g'(z)\xi)=(\frac{az+b}{cz+d},\frac{\xi}{(cz+d)^2}).$$
We can thus identify $T^1\bbH\cong \PSL_2(\bbR)$ and the unit tangent bundle of the modular surface $\calM=\PSL_2(\bbZ)\bs \bbH$ as
 $T^1\calM\cong \PSL_2(\bbZ)\bs \PSL_2(\bbR)$. With these identification the Liouville measure $\mu$ on $T^1\calM$ is the projection of the Haar measure of $\PSL_2(\bbR)$. This measure projects down to the hyperbolic area on $\calM$ normalized so that $\mathrm{Area}(\calM)=1$. Specifically, in the coordinates $(z,\xi)=(z,ye^{i\theta})$ we have $d\mu(x,y,\theta)=\frac{3}{\pi^2}\frac{dxdy}{y^2}d\theta$.

\subsection{Geodesic flow}
The geodesic flow $\vphi_t:T^1\calM\to T^1\calM$ sends a point $(z_0,\xi_0)\in T^1\calM$ to the point $(z_t,\xi_t)$ obtained by flowing for time $t$ along the geodesic $\gamma$ with starting point $\gamma(0)=z_0$ and direction $\gamma'(0)=\xi_0$. The closed geodesics on the modular surface are the closed orbits of this flow.

There is a one-one correspondence between oriented (primitive) closed geodesics on $T^1\calM$ and (primitive) hyperbolic conjugacy classes in $\PSL_2(\bbZ)$, and we denote by $\{A_\gamma\}$ the conjugacy corresponding to a closed geodesic $\gamma$. Here a closed geodesic is called primitive if it wraps once around and a hyperbolic element is primitive if it is not the power of some other hyperbolic element.

Recall that a hyperbolic element $A\in \PSL_2(\bbZ)$ has two fixed points on the real line (which are conjugate quadratic irrationals). We note that if $A\in \{A_\gamma\}$ then $\gamma$ has a lift $\hat{\gamma}$ to the upper half plane hitting the real line at the two fixed points of $A$ (different lifts will correspond to different conjugates of $A_\gamma$).

\subsection{Symmetries}
We have two natural symmetries on $T^1\calM$. One is \emph{time-reversal} symmetry given by  $(z,\xi)\mapsto (z,-\xi)$, and the other is \emph{orientation-reversal} symmetry given by $(z,\xi)\mapsto (-\bar{z},-\bar{\xi})$.

If $(z,\xi)\in T^1\calM$ is a point on a closed geodesic, $\gamma$, then $(z,-\xi)$ and $(-\bar{z},-\bar{\xi})$ are points on two different closed geodesic we call the time reversal, $\gamma^{-1}$, and the orientation reversal, $\bar{\gamma}$, of $\gamma$ respectively. If $\{A_\gamma\}$ is the conjugacy class in $\PSL_2(\bbZ)$ corresponding to $\gamma$, then $\{A_{\gamma^{-1}}\}=\{A_\gamma^{-1}\}$ and $\{A_{\bar{\gamma}}\}=\{wA_\gamma w\}$ with $w=\left(\begin{smallmatrix}1 & 0 \\ 0 & -1\end{smallmatrix}\right)$.

Following \cite{Sarnak07}, we call a primitive geodesic \emph{inert} if $\bar{\gamma}=\gamma$ and \emph{reciprocal} if $\gamma^{-1}=\gamma$. We note for future reference that $\gamma$ is inert if and only if $A_\gamma=B^2$ for some $B\in \PGL_2(\bbZ)$ with $\det(B)=-1$ (see \cite[Page 227]{Sarnak07}).

\subsection{Linking numbers}
We recall that a closed geodesic, $\gamma$, on $T^1\calM$ gives rise to a knot in the $3$-sphere with a trefoil knot removed, and that the linking number, $\lk(\gamma)$, is the linking number of this knot with the trefoil knot. In \cite{Ghys07}, Ghys showed that
$\lk(\gamma)=\Psi(A_\gamma)$, where $\Psi:\PSL_2(\bbZ)\to \bbZ$ denotes the Rademacher function.

The Rademacher function $\Psi(A)$ depends only on the conjugacy class of $A$, and can be computed by expressing $A$ as a word in the generators
\[S=\begin{pmatrix} 0 &-1\\ 1& 0\end{pmatrix},\quad U=\begin{pmatrix}  1& -1\\ 1&  0\end{pmatrix},\]
where we recall the presentation $\PSL_2(\bbZ)=<S,U|S^2=U^3=I>$.
The reader can consider the following as the definition of $\Psi(A)$ (see \cite[Page 54]{RademacherGrosswald72} for another equivalent definition and some properties of the Rademacher function).
Any element $A\in \PSL_2(\bbZ)$ is conjugated to either $S,U,U^{-1}$ or
\[SU^{\epsilon_1}\cdots SU^{\epsilon_k},\quad \epsilon_j=\pm 1.\]
We then have $\Psi(S)=0$, $\Psi(U^{\pm 1})=\mp 2 $ and $\Psi(A)=\sum_{j=1}^k\epsilon_j$ if
$A$ is conjugated to $SU^{\epsilon_1}\cdots SU^{\epsilon_k}$.

From this, together with the relation $wSU=SU^{-1}w$, we get that the linking number changes sign under time reversal or orientation reversal,
that is $\lk(\bar\gamma)=\lk(\gamma^{-1})=-\lk(\gamma)$. In particular, if $\gamma$ is inert or reciprocal then $\link(\gamma)=0$.

The Rademacher function also comes up in the multiplier system of the Dedekind
eta function
\begin{equation*}
\eta_D(\tau)=e^{\pi i \tau/12}\prod_{m=1}^\infty(1-e^{2\pi i m\tau}).
\end{equation*}
Specifically, for $A=\left(\begin{smallmatrix} a & b\\ c & d\end{smallmatrix}\right)\in\SL(2,\bbZ)$ with $\Tr(A)>0$
$$\eta_D(Az)=\nu_{1/2}(A)(cz+d)^{1/2}\eta_D(z),$$
with $\nu_{1/2}(A)=e^{i\pi\Psi(A)/12}$. Consequently, for any real $r\in (-6,6)$ we have that $\nu_r(A)=e^{i\pi r\Psi(A)/6}$ is a multiplier system for modular forms of real weight $r$.
Using this observation, in \cite{Sarnak10link} Sarnak shows how a careful analysis of the Selberg trace formula for modular forms of real weight (see \cite[Chapter 9]{Hejhal83}) implies
\begin{equation}\label{e:SM}
\sum_{\gamma\in \calC(T)} \ell(\gamma)e^{\frac{\pi i r }{6}\Psi(A_\gamma)}=
\left\lbrace\begin{array}{cc} \frac{e^{(1-\frac{|r|}{2})T}}{1-|r|/2}+ O(e^{\frac{3T}{4}}\log(\tfrac{1}{|r|})) & |r|\leq \frac{1}{2} \\
O(e^{\frac{3T}{4}}) & |r|>\frac{1}{2}\end{array}\right..
\end{equation}
Some of the delicate estimates needed for the proof were done by Mozzochi in \cite{Mozzochi10}.
From this, the estimate $|\calC_n(T)|\sim \frac{e^T}{3T^2}$ (and much more) is obtained by integrating \eqref{e:SM} against $e^{-\pi i n r/6}$.

\begin{rem}We note for future reference that \eqref{e:SM} still holds if we replace the sum over primitive geodesics by a sum over all closed geodesics.
This is because the prime geodesic theorem implies that the contribution of the non-primitive geodesics to such a sum is bounded by $O(Te^{T/2})$.
\end{rem}

\section{Closed geodesics and quadratic irrationals}
In this section we recall the results of Series \cite{Series85} on coding the geodesic flow as a suspended flow over the Gauss map, and use it to reduce the problem of equidistribution of closed geodesics to the equidistribution of quadratic irrationals.

\subsection{The Gauss map and continued fraction}
Any $x\in(0,1)$ has a continued fraction expansion
\[x=[a_1,a_2,a_3,\ldots]=\tfrac{1}{a_1+\frac{1}{a_2+\frac{1}{a_3+\ldots}}},\;a_1,a_2,a_3,\ldots\in \bbN,\]
(which terminates if and only if $x$ is rational). When the expansion is periodic, that is, if there is $n\in \bbN$ such that $a_{n+j}=a_j$ for all $j$, we write $x=[\overline{a_1,\ldots,a_n}]$. The points with such a periodic expansion are precisely the quadratic irrationals $x\in (0,1)$ whose conjugate satisfies $\bar{x}<-1$. Here the conjugate $\bar{x}$ of a quadratic irrational is the second root of the same quadratic polynomial.

Let $\T:(0,1]\to [0,1)$ denote the Gauss map given by $\T(x)=\{1/x\}$ where $\{x\}$ denotes the fractional part of $x$.
The Gauss map acts on the continued fraction expansion by a shift to the left, hence, the periodic points for $\T$ are the points with a periodic expansion.

Let $\hat{\T}:(0,1]\times
(0,1)\to [0,1)\times (0,1)$ denote the extension of the Gauss map, given by $\hat{\T}(x,y)=(\{\frac{1}{x}\},\frac{1}{y+[1/x]})$.
The extended map $\hat{\T}$ acts on the continued fraction of both points together by a shift
\[\hat{\T}([a_0,a_1,\ldots],[a_{-1},a_{-2},\ldots])=([a_1,a_2,\ldots],[a_{0},a_{-1},\ldots]).\]
Hence $\hat{\T}$ is invertible and there is a correspondence between $\hat{\T}$-closed orbits and $\T$-closed orbits.
Specifically, the $\T$-orbit of $x=[\overline{a_1,\ldots,a_n}]$ corresponds to the $\hat{\T}$-orbit of $(x,-1/\bar{x})=([\overline{a_1,\ldots,a_n}],[\overline{a_n,\ldots,a_1}])$.

We recall that the Gauss measure $\nu$ on $[0,1]$, given by
\begin{equation}\label{e:nu}
d\nu(x)=\frac{1}{\log2}\frac{dx}{(1+x)},
\end{equation}
is the unique $\T$-invariant probability measure on $[0,1]$ equivalent to Lebesgue measure. We extend the Gauss measure to the measure $\tilde{\nu}$ on $[0,1]\times[0,1]$ given by
\begin{equation}\label{e:tnu}
d\tilde{\nu}(x,y)=\frac{1}{\log2}\frac{dxdy}{(1+xy)^2}.
\end{equation}
We note that $\tilde{\nu}$ is $\hat{\T}$-invariant, that its projection to each factor is $\nu$, and that this is the only measure satisfying these properties.

\subsection{Geodesic flow and suspended flow}
Consider the spaces
$$\Sigma=(0,1]\times (0,1)\times \bbZ/2\bbZ, \mbox{ and } \Sigma^*=\{(x,y,e)\in \Sigma|1/x\not\in \bbN\}.$$
We can further extend the Gauss map to the map $\hat{\T}:\Sigma^*\to\Sigma$ by $\hat{\T}(x,y,e):=(\hat{\T}(x,y),e+1)$.
In \cite[Theorems A and B]{Series85}, Series defined a cross section $X\subset T^1\calM$ for the geodesic flow, together with a map
\[p_0:\Sigma \to X,\]
satisfying the following properties:
\begin{enumerate}
\item $p_0$ is continuous, open, and surjective. It is also injective except that $p_0(1,1,0)=p_0(1,1,1)$.
\item A geodesic $\gamma$ starting from $p_0(x,y,e)$ has a lift $\hat{\gamma}$ to $\bbH$ with endpoints $(\hat{\gamma}(-\infty),\hat{\gamma}(\infty))$ given by $(-y,1/x)$ if $e=0$ and $(y, -1/x)$ if $e=1$.
\item Let $X^*=p_0(\Sigma^*)$, then the first return map $P:X^*\to X$ is well defined and satisfies
$P\circ p_0=p_0\circ \hat{\T}.$
\end{enumerate}

For any $\sigma\in \Sigma^*$, let $r(\sigma)$ denote the hyperbolic distance between $p_0(\sigma)$ and $p_0(\hat{\T}(\sigma))$. This was explicitly computed in \cite[Section 3.2]{Series85} and is given by
$$r(\sigma)=r_0(\sigma)+r_0(\hat{\T}(\sigma)) \mbox{ with } r_0(x,y,e)=-\tfrac{1}{2}\log(xy).$$
Let
\[\Sigma_r=\{(\sigma,t)|\sigma\in \Sigma,\; 0\leq t\leq r(\sigma)\},\]
where we identify $(\sigma,r(\sigma))=(\hat{\T}(\sigma),0)$, and define the suspended flow $\psi_t:\Sigma_r\to\Sigma_r$ by $\psi_t(\sigma,t_0)=(\sigma,t_0+t)$.

We can extend the map $p_0:\Sigma\to X$ to a map $p:\Sigma_r\to T^1\calM$ satisfying $p\circ \psi_t=\vphi_t\circ p$ by flowing along the geodesics, that is, we let
$p(\sigma,t)=\vphi_t( p_0(\sigma))$. The map $p:\Sigma_r\to T^1\calM$ gives local coordinates on $T^1\calM$ in which the Liouville measure is given by
$d\mu(x,y,t)=\frac{3}{\pi^2}\frac{dxdy}{(1+xy)^2}dt$ (see \cite[Section 3.1]{Series85}).

\subsection{Periodic orbits}
The above coding gives a period preserving correspondence between primitive closed geodesics and primitive closed $\psi_t$-orbits.
Moreover, the closed $\psi_t$-orbits are easily classified in terms of the periods of the Gauss map, that is, the purely periodic quadratic irrationals.

Specifically, to any purely periodic quadratic irrational $x\in (0,1)$ we have two closed $\psi_t$-orbits, the orbits of $(x,-1/\bar{x},0)$ and of $(x,-1/\bar{x},1)$ (clearly these only depends on the $\T$-orbit of $x$). We note that the two closed geodesics corresponding to the same $\T$-orbit are a pair, $\gamma,\bar{\gamma}$, of orientation reversed geodesics.
For $x\in Q$ with a minimal even expansion $x=[\overline{a_1,\ldots, a_{2n}}]$, the length of each of the corresponding closed geodesics is given by
$$\ell(x):=\sum_{j=1}^{2n} r(x,-1/\bar{x})=-2\sum_{j=1}^{2n}\log(\T x).$$
 (see \cite[Section 3.2]{Series85}).
\begin{rem}
In this formula it is important that the expansion $x=[\overline{a_1,\ldots, a_{2n}}]$ is the minimal even expansion of $x$, in the sense that there is no $n'<n$ with $x=[\overline{a_1,\ldots, a_{2n'}}]$.
If $x$ has an odd minimal expansion, $x=[\overline{a_1,\ldots, a_{n}}]$ with $n$ odd, the length is still computed using the minimal even expansion $x=[\overline{a_1,\ldots, a_{n},a_1,\ldots, a_{n}}]$.
\end{rem}

\subsection{Linking numbers}
The linking number of a closed geodesic can be computed from the minimal continued fraction expansion of the corresponding quadratic irrational (up to a sign). To do this we will use the following lemma.
\begin{lem}\label{l:Ba}
Let $x=[\overline{a_1,\ldots,a_n}]$ be a minimal expansion of a quadratic irrational. Then the cyclic group of matrices in $\PGL_2(\bbZ)$ fixing $x$ is generated by $B_{a_1}\cdots B_{a_n}$ with
$B_a=\begin{pmatrix} 0& 1\\ 1 & a\end{pmatrix}$.
\end{lem}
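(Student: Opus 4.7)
The plan has three steps: (a) verify that $M := B_{a_1} B_{a_2} \cdots B_{a_n}$ fixes $x$; (b) show that the stabilizer of $x$ in $\PGL_2(\bbZ)$ is infinite cyclic; (c) establish that $M$ generates it. For (a), note that $B_a$ acts on $\bbR\cup\{\infty\}$ by the M\"obius transformation $y\mapsto 1/(a+y)$, so iteration yields
\[ M \cdot y \;=\; \frac{1}{a_1 + \frac{1}{a_2 + \cdots + \frac{1}{a_n + y}}}, \]
and substituting $y = x = [\overline{a_1,\ldots,a_n}]$ together with the periodicity of the expansion immediately gives $M\cdot x = x$.

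For (b), any $N \in \PGL_2(\bbZ)$ fixing $x$ must also fix its Galois conjugate $\bar x$ (since $N$ has integer coefficients and conjugation commutes with the M\"obius action of rational matrices). Hence $N$ lies in the joint stabilizer of $\{x,\bar x\}$ in $\PGL_2(\bbR)$, which is a one-parameter abelian Lie subgroup — the centralizer of the hyperbolic element $M$. Its intersection with the discrete subgroup $\PGL_2(\bbZ)$ is therefore discrete and non-trivial (it contains $M$), hence infinite cyclic.

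For (c), suppose $M = N^k$ with $N$ in the stabilizer and $k \geq 1$; we need $k = 1$. The idea is to invoke the period-preserving correspondence (recalled in the preceding subsection and due to Series) between periodic $\hat{\T}$-orbits of quadratic irrationals and primitive closed orbits of the suspended flow $\psi_t$, which in turn are in bijection with primitive hyperbolic conjugacy classes in $\PGL_2(\bbZ)$. Under this dictionary, the minimality of the period $n$ of the expansion $x = [\overline{a_1,\ldots,a_n}]$ translates into primitivity of the associated closed orbit, and hence of $M$ in its cyclic stabilizer, forcing $k = 1$. The main obstacle lies precisely in this last step: one must verify that it is the conjugacy class of $M$ itself — and not that of some proper root of $M$ in $\PGL_2(\bbZ)$ — which corresponds to the primitive geodesic. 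This is especially delicate in the inert case of odd $n$, where $\det M = -1$ so $M \in \PGL_2(\bbZ) \setminus \PSL_2(\bbZ)$ and the $\PGL_2(\bbZ)$-stabilizer strictly contains the $\PSL_2(\bbZ)$-stabilizer (which is generated by $M^2$). A more self-contained route, which avoids invoking the full geodesic correspondence, is to recognize $M$ as the convergent matrix $\left(\begin{smallmatrix}p_{n-1} & p_n \\ q_{n-1} & q_n\end{smallmatrix}\right)$ of $x$ and invoke the classical fact that the minimal-period convergent yields the fundamental unit in the associated real quadratic order.
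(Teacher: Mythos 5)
Your steps (a) and (b) are essentially fine, with one small caveat in (b): the pointwise stabilizer of $\{x,\bar x\}$ in $\PGL_2(\bbR)$ is isomorphic to $\bbR^\times$, not $\bbR$, so ``discrete and nontrivial, hence infinite cyclic'' requires ruling out the order-two reflection in the geodesic joining $x$ to $\bar x$; this is easy (such an involution in $\PGL_2(\bbZ)$ has trace $0$ and determinant $-1$, hence rational fixed points on the boundary, so it cannot fix the irrational $x$), but it should be said. The genuine gap is step (c), and you have located it yourself without closing it: you never prove that the generator $N$ of the stabilizer is, up to sign and inversion, a product $B_{b_1}\cdots B_{b_k}$ with $b_j\in\bbN$, which is exactly what is needed to turn minimality of the period of $x$ into primitivity of $M$. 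Neither of your proposed fixes works. Invoking the Series correspondence is circular in the logical structure of the paper: Lemma \ref{l:Ba} is used precisely to identify which primitive element of $\PGL_2(\bbZ)$ corresponds to a given periodic $\T$-orbit (in the computation of $\Psi(A_\gamma)$ and in Proposition \ref{p:inert}), and the correspondence as stated concerns $\PSL_2(\bbZ)$-classes, whereas the delicate odd-period case is exactly about a determinant $-1$ root lying in $\PGL_2(\bbZ)\setminus\PSL_2(\bbZ)$. Citing the ``classical fact'' that the minimal period yields the fundamental unit is, in matrix form, a restatement of the lemma itself, and the norm $-1$ (odd period) case is precisely the part of that classical theory one would have to prove.

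The paper supplies the missing content by an elementary but careful matrix argument, entirely different in flavor from your outline. It first observes that if every stabilizer element $B$ can be written (up to replacing $B$ by $\pm B^{\pm1}$) as $B_{b_1}\cdots B_{b_k}$ with $b_j\in\bbN$, then $Bx=x$ reads $[b_1,\ldots,b_k,\overline{a_1,\ldots,a_n}]=[\overline{a_1,\ldots,a_n}]$, and uniqueness of continued fraction expansions together with minimality of $n$ forces $B=(B_{a_1}\cdots B_{a_n})^m$. It then establishes the required factorization: matrices with a zero entry fixing $x\in(0,1)$ and $\bar x<-1$ are directly seen to be $B_d^{\pm1}$; otherwise one of $\pm B^{\pm1}$ has all positive entries, and a Euclidean-algorithm-type reduction (repeated left multiplication by $B_{b}^{-1}$) terminates in one of a handful of boundary cases, each of which is resolved by hand. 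To complete your proof you would need to reproduce an argument of this kind, or give a genuinely complete reference for the $\PGL_2$ (norm $\pm1$) version of the fundamental-automorph statement.
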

\begin{proof}
Let $B\in \PGL_2(\bbZ)$ be a matrix fixing $x$. If we have that $B=B_{b_1}\cdots B_{b_k}$ for some $k\in \bbN$ and $b_1,\ldots,b_k\in \bbN$ then
$$[\overline{a_1,\ldots,a_n}]=x=Bx=[b_1,\ldots,b_k,\overline{a_1,\ldots,a_n}],$$
which can only happen if $k=nm$ and $b_{nl+j}=a_{j}$ for all $0\leq \ell \leq m$ and $1\leq j\leq n$, so that
$B=(B_{a_1}\cdots B_{a_n})^m$. Consequently, we need to show that either $B$ or $B^{-1}$ can be expressed as such a product.

First, if one of the coefficients of $B$ is zero then it must be in one of the forms $\left(\begin{smallmatrix}0&\pm 1\\1 &d\end{smallmatrix}\right),\;\left(\begin{smallmatrix}\pm 1& 0\\c &1\end{smallmatrix}\right),\;\left(\begin{smallmatrix}\pm 1& b\\0 &1\end{smallmatrix}\right)$, or $\left(\begin{smallmatrix}a&\pm 1\\1 &0\end{smallmatrix}\right)$. Of these the only ones having two fixed points $x\in(0,1)$ and $\bar{x}\in(-\infty,-1)$ are $\left(\begin{smallmatrix}0& 1\\1 &d\end{smallmatrix}\right)=B_d$ with $d\in \bbN$ and $\left(\begin{smallmatrix}-a& 1\\1 &0\end{smallmatrix}\right)=B_{a}^{-1}$ with $a\in \bbN$. We may thus assume from now on that $B$ has non-zero coefficients.

Next, we show that either $\pm B$ or $\pm B^{-1}$ has positive coefficients.
Write $B=\left(\begin{smallmatrix}a& b\\c &d\end{smallmatrix}\right)$ and (replacing $B$ with $-B$ if necessary we may) assume $a>0$.  The equation $|ad-bc|=1$ implies that $ad$ and $bc$ have the same sign. If $d>0$ then $bc>0$ so either $b,c>0$ and all coefficients are positive, or $b,c<0$ in which case $B^{-1}=\left(\begin{smallmatrix}d&-b\\-c &a\end{smallmatrix}\right)$ has positive coefficients.
Finally if $d<0$ then $bc<0$ and it is easy to check that if $a,b>0$ and $c,d<0$ then $\left(\begin{smallmatrix}a& b\\c &d\end{smallmatrix}\right)$ can't have a fixed point in $(0,1)$ and if $a,c>0$ and $b,d<0$ then $\left(\begin{smallmatrix}a& b\\c &d\end{smallmatrix}\right)^{-1}=\left(\begin{smallmatrix}d& -b\\-c &a\end{smallmatrix}\right)$ can't have a fixed points in $(-\infty,-1)$.

From here on (replacing $B$ with $\pm B^{-1}$ if necessary) we assume that $B=\left(\begin{smallmatrix}a& b\\c &d\end{smallmatrix}\right)$ has positive coefficients. Hence there is some $b_1\in \bbN\cup \{0\}$ such that
$$B_{b_1}^{-1}B=\left(\begin{smallmatrix}c-b_1a& d-b_1b\\a &b\end{smallmatrix}\right)=\left(\begin{smallmatrix}a'& b'\\c' &d'\end{smallmatrix}\right),$$
with $a'\in[0,a)$ and $c',d'>0$. If $a'=0$ or $b'=0$ we stop. Otherwise, the equation $|a'd'-b'c'|=1 $ implies that $b'>0$, so we still have a matrix with positive coefficients. We can reiterate this until we get to a matrix with $a'=0$ or $b'=0$ (this will terminate after at most $a$ steps).
We thus see that there are $b_1,\ldots, b_k\in \bbN\cup\{0\}$ such that $(B_{b_1}\cdots B_{b_k})^{-1}B$ is either $\left(\begin{smallmatrix}0& \pm 1\\1 &d'\end{smallmatrix}\right)$ with $d'>0$ or $\left(\begin{smallmatrix}1& 0\\c' &1\end{smallmatrix}\right)$ with $c'>0$.

We can easily exclude the case of $\left(\begin{smallmatrix}1& 0\\c' &1\end{smallmatrix}\right)$ by considering the action on the fixed point $\bar{x}\in(-\infty,-1)$ (notice that $\left(\begin{smallmatrix}1& 0\\c' &1\end{smallmatrix}\right)\bar x=\frac{\bar{x}}{c'\bar{x}+1}>0$ and $(B_{b_1}\cdots B_{b_k})^{-1}B\bar{x}=(B_{b_1}\cdots B_{b_k})^{-1}\bar{x}<0$).
This just leaves us with the two cases of $B=B_{b_1}\cdots B_{b_k}\left(\begin{smallmatrix}0& \pm 1\\1 &d'\end{smallmatrix}\right)$. We note that $\left(\begin{smallmatrix}0& 1\\1 &a\end{smallmatrix}\right)\left(\begin{smallmatrix}0& 1\\1 &0\end{smallmatrix}\right)\left(\begin{smallmatrix}0& 1\\1 &b\end{smallmatrix}\right)=\left(\begin{smallmatrix}0& 1\\1 &a+b\end{smallmatrix}\right)$,
so we may assume that all $b_j\in \bbN$ except perhaps $b_k$ and $b_1$ which could also be zero.

In the first case, $B=B_{b_1}\cdots B_{b_k}B_{d'}$, we must have that $b_1\neq 0$ as well, because otherwise $B_{b_1}\cdots B_{b_k}B_{d'}x>1$ in contradiction to $x=Bx\in (0,1)$. If $b_k>0$ we are done, while if $b_k=0$ we can rewrite $B=B_{b_1}\cdots B_{b_{k-2}} B_{\tilde{b}_{k-1}}$ with $\tilde{b}_{k-1}=b_{k-1}+d'\in \bbN$ so we are also done.

In the second case, $B=B_{b_1}\cdots B_{b_k}\left(\begin{smallmatrix}0&  -1\\1 &d'\end{smallmatrix}\right)$, if $b_k\neq 0$
use the identity
$$\left(\begin{smallmatrix}0& 1\\1 &b_k\end{smallmatrix}\right)\left(\begin{smallmatrix}0& -1\\1 &d'\end{smallmatrix}\right)=\left(\begin{smallmatrix}0& 1\\1 &b_k-1\end{smallmatrix}\right)\left(\begin{smallmatrix}0& 1\\1 &1\end{smallmatrix}\right)\left(\begin{smallmatrix}0& 1\\1 &d'-1\end{smallmatrix}\right),$$
to reduce this to the first case of $B=B_{b_1}\cdots B_{b_{k-1}}B_{b_k-1}B_1B_{d'-1}$. If $b_k=0$ we can rewrite
$B=B_{b_1}\cdots B_{b_{k-2}}\left(\begin{smallmatrix}0& -1\\1 & \tilde{d}\end{smallmatrix}\right)$ with $\tilde{d}=d'-b_{k-1}$.
Since $b_{k-2}\geq 1$ we have that $(B_{b_1}\cdots B_{b_{k-2}})^{-1}B\bar{x}<-1$ so we must have that $\tilde{d}\geq 1$ which is the case we dealt with above.

%We are left to deal with the case $B=B_{b_1}\cdots B_{b_k}\left(\begin{smallmatrix}0&  -1\\1 &d'\end{smallmatrix}\right)$.
%If we assume that $b_k>0$, we can conclude as above that $\left(\begin{smallmatrix}0&  -1\\1 &d'\end{smallmatrix}\right)\bar{x}=\frac{-1}{\bar{x}+d'}<-1$ so $d'\geq 2$. %If $b_k>1$
%Now use the identity
%$$\left(\begin{smallmatrix}0& 1\\1 &b_k\end{smallmatrix}\right)\left(\begin{smallmatrix}0& -1\\1 &d'\end{smallmatrix}\right)=\left(\begin{smallmatrix}0& 1\\1 &b_k-1\end{smallmatrix}\right)\left(\begin{smallmatrix}0& 1\\1 &1\end{smallmatrix}\right)\left(\begin{smallmatrix}0& 1\\1 &d'-1\end{smallmatrix}\right),$$
%to reduce this to the case $B=B_{b_1}\cdots B_{b_{k-1}}B_{b_k-1}B_1B_{d'-1}$ we have dealt with above.
%and as before we get that $b_1>0$ as well.
%If $b_k=1$ and $k>1$ we use the identity
%$$\left(\begin{smallmatrix}0& 1\\1 &b_{k-1}\end{smallmatrix}\right) \left(\begin{smallmatrix}0& 1\\1 &1\end{smallmatrix}\right) \left(\begin{smallmatrix}0& -1\\1 &d'\end{smallmatrix}\right)=\left(\begin{smallmatrix}0& 1\\1 &b_{k-1}+1\end{smallmatrix}\right)\left(\begin{smallmatrix}0& 1\\1 &d'-1\end{smallmatrix}\right),$$
%to get that $B=B_{b_1}\cdots B_{b_{k-2}}B_{b_{k-1}+1}B_{d'-1}$ and as before we must have $b_1>0$.
%When $b_k=1$ and $k=1$ it is easy to see that $B=\left(\begin{smallmatrix}0& 1\\1 &1\end{smallmatrix}\right) \left(\begin{smallmatrix}0& -1\\1 &d'\end{smallmatrix}\right)$ does not have a fixed point in $(0,1)$ for any $d'>1$.
\end{proof}
%\begin{proof}
%Let $V=US=\begin{pmatrix} 1& 1 \\ 0& 1\end{pmatrix}\in \PSL_2(\bbZ)$ and note that any matrix in $A\in \PSL_2(\bbZ)$ that is not a power of $V$ has an expansion $A=SV^{-b_1}SV^{b_2}\ldots SV^{(-1)^k b_k}$ with either $b_1,\ldots,b_k\in \bbN$ or $-b_1.,\ldots,-b_k\in \bbN$. Indeed, using the Euclidean algorithm one finds and expansion $A=V^{c_1}SV^{c_2}\ldots SV^{c_l}S$ with $c_j\in \bbZ$ and using the relation $I=U^3=VSVSVS$ we can arrange for the powers to have alternating signs.
%
%Now, let $B\in \PGL_2(\bbZ)$ fix $x$ and $\bar{x}$ and write it as
%\[B=w^jSV^{-b_1}SV^{b_2}\ldots SV^{\pm b_k},\]
%where $\det{B}=(-1)^j$. After maybe replacing $B$ with $B^{-1}$, the fact that it has one fixed point in $(0,1)$ and another in $(-\infty,-1)$ implies that
%$b_1,\ldots,b_k\in \bbN$ and that either $j=0$ and $k$ is even or $j=1$ and $k$ is odd, hence in either case
%$B=B_{b_1}\cdots B_{b_k}$.
%
%From this expansion we get that
%$$[\overline{a_1,\ldots,a_n}]=x=Bx=[b_1,\ldots,b_k,\overline{a_1,\ldots,a_n}],$$
%which can only happen if $k=nm$ and $b_{nl+j}=a_{j}$ for all $0\leq \ell \leq m$ and $1\leq j\leq n$, so that indeed
%$B=(B_{a_1}\cdots B_{a_n})^m$.
%\end{proof}

\begin{prop}
Let $x=[\overline{a_1,\ldots,a_{2n}}]$ be the minimal even expansion of a quadratic irrational, let $\gamma$ be one of the corresponding primitive closed geodesics, and let $\{A_\gamma\}$ denote the corresponding hyperbolic conjugacy class. Then
\[|\Psi(A_\gamma)|=|\sum_{j=1}^{2n}(-1)^ja_j|=|\Alt(x)|.\]
(recall that $\Psi(A_{\bar{\gamma}})=-\Psi(A_\gamma)$.)
\end{prop}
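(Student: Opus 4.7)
The plan is to identify the generator of the cyclic fixer of $x$ in $\PSL_2(\bbZ)$ using Lemma \ref{l:Ba}, rewrite it as an explicit word in the standard generators $S,U$, and then read off the Rademacher function directly from the definition.

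First, by Lemma \ref{l:Ba}, the cyclic group of $\PGL_2(\bbZ)$-matrices fixing $x$ is generated by $B=B_{a_1}\cdots B_{a_{2n}}$. Since $\det(B_a)=-1$ and the period length is even, $\det(B)=+1$, so $B$ already lies in $\PSL_2(\bbZ)$; because $2n$ is the minimal even period, $B$ is primitive. Hence the hyperbolic conjugacy class $\{A_\gamma\}$ (for one of the two primitive geodesics attached to the $\T$-orbit) is the class of $B^{\pm 1}$.

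Next, factor $B_a=JT^{a}$ with $J=\bigl(\begin{smallmatrix}0&1\\1&0\end{smallmatrix}\bigr)$ and $T=\bigl(\begin{smallmatrix}1&1\\0&1\end{smallmatrix}\bigr)$. A direct check gives $JT^{a}J=L^{a}$ for $L=\bigl(\begin{smallmatrix}1&0\\1&1\end{smallmatrix}\bigr)$, so using $J^2=I$ the product telescopes to
\[
B \;=\; L^{a_1}T^{a_2}L^{a_3}T^{a_4}\cdots L^{a_{2n-1}}T^{a_{2n}}.
\]
Now a direct matrix computation (using the presentation of $\PSL_2(\bbZ)$) shows that in $\PSL_2(\bbZ)$
\[
SU \;=\; L^{-1}, \qquad SU^{-1} \;=\; T^{-1}.
\]
Inverting $B$ therefore gives the clean word
\[
B^{-1} \;=\; (SU^{-1})^{a_{2n}}(SU)^{a_{2n-1}}\cdots (SU^{-1})^{a_2}(SU)^{a_1},
\]
which is already in the normal form $SU^{\epsilon_1}\cdots SU^{\epsilon_k}$ required by the definition of $\Psi$. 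Reading off the exponents, the sum $\sum_j \epsilon_j$ equals $a_1-a_2+a_3-\cdots-a_{2n}=-\Alt(x)$. Hence $\Psi(B^{-1})=-\Alt(x)$, and since $\Psi$ reverses sign under inversion we obtain $\Psi(B)=\Alt(x)$. As $A_\gamma$ is conjugate to $B^{\pm 1}$ and $\Psi$ is conjugation invariant, $|\Psi(A_\gamma)|=|\Alt(x)|$.

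The routine part is the matrix identities $JT^aJ=L^a$, $SU=L^{-1}$, $SU^{-1}=T^{-1}$, which just need to be verified in $\PSL_2(\bbZ)$ (with the usual care about the overall sign). The one place to be slightly careful is Step 1: one must observe that in the even minimal case the $\PGL_2$ generator is already in $\PSL_2$, so it is the \emph{primitive} element fixing $x$ in $\PSL_2(\bbZ)$, and hence is $A_\gamma$ up to conjugation and inversion — no additional square needs to be taken (contrast with the odd minimal case, where one would have to square, producing an even number of $SU^{\pm 1}$ letters symmetrically and forcing $\Psi=0$, consistent with the odd case treated earlier).
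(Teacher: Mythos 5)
Your proof is correct and follows essentially the same route as the paper's: both identify $B=B_{a_1}\cdots B_{a_{2n}}$ as the primitive element of $\PSL_2(\bbZ)$ fixing $x$ via Lemma \ref{l:Ba}, rewrite it as a word in $SU$ and $SU^{-1}$, and read off $\Psi$ from the normal form (the paper uses $V=US$ and the reflection $w$ where you use $J$, $T$, $L$ and pass to $B^{-1}$, but this is only a difference in bookkeeping). The one imprecision is your opening claim that $B$ generates the full $\PGL_2(\bbZ)$-fixer: when the minimal expansion of $x$ has odd length $n$ the generator is $B_{a_1}\cdots B_{a_n}$ and $B$ is its square; this does not affect your argument, since $B$ is still the primitive element of the $\PSL_2(\bbZ)$-fixer, which is all you use.
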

\begin{proof}
Let $A=B_{a_1}\cdots B_{a_{2n}}\in \PSL_2(\bbZ)$. Since $x=[\overline{a_1,\ldots,a_{2n}}]$ is the minimal even expansion, by Lemma \ref{l:Ba} we have that $A,A^{-1}$ are the unique primitive hyperbolic elements in $\PSL_2(\bbZ)$ fixing $x,\bar{x}$. Since (some conjugate of) $A_\gamma$ also has these fixed points then either  $A\in \{A_\gamma\}$ or $A^{-1}\in \{A_\gamma\}$, and hence $\Psi(A_\gamma)=\pm \Psi(A)$.

Let $V=US$ and note that $B_a=wSV^a=SV^{-a}w$ so that
\[A=SV^{-a_1}SV^{a_2}SV^{-a_3}\cdots SV^{a_{2n}}.\]
Using the relations
$SV^aS=(SU)^a$ and $V^{-a}=(SU^{-1})^a$, we can write
\[A=S[SU^{-1}]^{a_1}[SU]^{a_2}[SU^{-1}]^{a_3}\ldots[SU]^{a_{2n}} S^{-1},\]
to get that indeed $\Psi(A)=-a_1+a_2-a_3+\ldots+a_{2n}$.
%Since $A_{\bar\gamma}=wA_\gamma w$ with $w=\left(\begin{smallmatrix} -1& 0 \\ 0& 1\end{smallmatrix}\right)$, the first statement follows from the fact that for any hyperbolic $A$ we have $\Psi(A)=-\Psi(wAw^{-1})$.
\end{proof}
%In view of this relation we will define the function $\Psi$ on a quadratic irrational $x\in (0,1)$ with a minimal even expansion $x=[a_1,\ldots, a_{2n}]$ by
%\begin{equation}\label{e:Psi}
%\Alt(x)=-a_1+a_2-a_3+\ldots+a_{2n}.
%\end{equation}

Note that if $x=[\overline{a_1,\ldots,a_n}]$ is a minimal odd expansion then its minimal even expansion is $x=[\overline{a_1,\ldots,a_n,a_1,\ldots, a_n}]$. In this case, if $A\in \PSL_2(\bbZ)$ is the primitive element fixing $x$ then the above argument shows that $\Psi(A)=0$. Consequently, a primitive geodesic corresponding to $x\in Q_{\mathrm{odd}}$ has linking number zero. We now show that these are precisely the inert primitive geodesics.
\begin{prop}\label{p:inert}
A primitive closed geodesic is inert if and only if it corresponds to the $\T$-orbit of some $x\in Q_{odd}$.
\end{prop}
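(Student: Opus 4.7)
The plan is to leverage the characterization of inert geodesics mentioned earlier ($\gamma$ inert $\iff$ $A_\gamma=B^2$ for some $B\in\PGL_2(\bbZ)$ with $\det(B)=-1$) together with Lemma \ref{l:Ba}. The key numerical observation is that $\det(B_a)=-1$, so if $C:=B_{a_1}\cdots B_{a_n}$ is the generator of the stabilizer of $x$ in $\PGL_2(\bbZ)$ provided by Lemma \ref{l:Ba}, then $\det(C)=(-1)^n$. The parity of the minimal period is therefore encoded in whether $C$ already lies in $\PSL_2(\bbZ)$ or not, which is exactly the datum distinguishing inert geodesics.

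More concretely, I would first let $x=[\overline{a_1,\ldots,a_n}]$ be the minimal expansion of the quadratic irrational attached to $\gamma$ and set $C=B_{a_1}\cdots B_{a_n}$. By Lemma \ref{l:Ba} the stabilizer of $x$ in $\PGL_2(\bbZ)$ is the infinite cyclic group $\langle C\rangle$, and the primitive hyperbolic element of $\PSL_2(\bbZ)$ fixing $x$ (i.e.\ the one representing $\{A_\gamma\}$ up to inversion) is the smallest positive power of $C$ that lies in $\PSL_2(\bbZ)$: namely $C$ itself when $n$ is even and $C^2$ when $n$ is odd.

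For the ``if'' direction, assume $x\in Q_{\mathrm{odd}}$, so $n$ is odd and $\det(C)=-1$. Then $A_\gamma=C^{\pm 2}=(C^{\pm 1})^2$ with $C^{\pm 1}\in\PGL_2(\bbZ)$ of determinant $-1$, hence $\gamma$ is inert. For the ``only if'' direction, suppose $\gamma$ is inert, write $A_\gamma=B^2$ with $B\in\PGL_2(\bbZ)$ and $\det(B)=-1$. Since $B$ fixes the fixed points of $A_\gamma$, in particular $x$, Lemma \ref{l:Ba} forces $B=C^k$ for some $k\in\bbZ$. Then $-1=\det(B)=(-1)^{nk}$ forces $n$ to be odd, i.e.\ $x\in Q_{\mathrm{odd}}$.

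The only subtle point is bookkeeping between $\PSL_2(\bbZ)$ and $\PGL_2(\bbZ)$: one must check that the primitive element of $\PSL_2(\bbZ)$ fixing $x$ is exactly $C$ for even $n$ and $C^2$ for odd $n$, and conversely that any $B\in\PGL_2(\bbZ)$ with $B^2=A_\gamma$ is actually a power of $C$. Both follow immediately from Lemma \ref{l:Ba} (which is phrased in $\PGL_2(\bbZ)$) and the fact that $C$ has infinite order, so no genuine obstacle arises beyond this identification.
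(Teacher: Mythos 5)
Your proof follows essentially the same route as the paper: combine the characterization of inert geodesics as those with $A_\gamma=B^2$, $\det(B)=-1$, with Lemma \ref{l:Ba} and the determinant count $\det(B_{a_1}\cdots B_{a_n})=(-1)^n$. The one step you gloss over is the assertion that $B$ ``fixes the fixed points of $A_\gamma$, in particular $x$'': from $B^2=A_\gamma$ you only get that $B$ commutes with $A_\gamma$ and hence \emph{permutes} $\{x,\bar x\}$, so a priori $B$ could swap them, in which case Lemma \ref{l:Ba} would not apply to $B$. The paper closes this by observing that any $B\in\PGL_2(\bbZ)$ with $\det(B)=-1$ has real fixed points (its fixed-point equation has discriminant $\mathrm{tr}(B)^2+4>0$), and every fixed point of $B$ is a fixed point of $B^2=A_\gamma$; hence the fixed points of $B$ are exactly $x$ and $\bar x$ and the lemma applies. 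With that one line added, your argument is complete and matches the paper's.
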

\begin{proof}
Let $\gamma$ be a closed geodesic, $x=[\overline{a_1,\ldots,a_n}]$ a quadratic irrational in the corresponding $\T$-orbit, and $A\in \{A_\gamma\}$ the primitive hyperbolic element fixing $x$. We recall that $\gamma$ is inert if and only if $A=B^2$ for some $B\in \PGL_2(\bbZ)$ with $\det(B)=-1$.
In this case, since any $B\in \PGL_2(\bbZ)$ with $\det(B)=-1$ has a fixed point on the real line and any fixed point of $B$ is also a fixed point of $A$, the fixed points of $B$ are also $x$ and $\bar{x}$. Lemma \ref{l:Ba}, now implies that $B=B_{a_1}\cdots B_{a_n}$ and hence $\det(B)=(-1)^n$ so $n$ must be odd.
\end{proof}

\subsection{Equidistribution}
We can use the correspondence between closed geodesics and quadratic irrationals to relate equidistribution of closed geodesics on $T^1\calM$ to equidistribution of quadratic irrationals on $[0,1]$.

Let $\calC'\subseteq \calC$ denote any set of oriented primitive closed geodesics which is invariant under orientation reversal symmetry (that is, $\gamma\in \calC'$ if and only if $\bar\gamma\in \calC'$). Let $Q'\subseteq Q$ denote the corresponding set of quadratic irrationals (i.e., the set of points obtained as endpoints of lifts of geodesics from $\calC'$). Let $\calC'(T)$ (respectively $Q'(T)$) denote the set of $\gamma\in \calC'$ with $\ell(\gamma)\leq T$ (respectively $x\in Q'$ with $\ell(x)\leq T$).
Consider the counting function
%Consider the counting functions $\pi_Q(T)=\#\{x\in Q| \ell(x)\leq T\},\quad \pi_\calC(T)=\#\{\gamma \in \calC| \ell(\gamma)\leq T\},$$
%and
$$\Lambda_{\calC'}(T)=\sum_{\gamma\in \calC'(T)}\ell(\gamma).$$
%$$\pi_Q(T)=\#\{x\in Q| \ell(x)\leq T\}.$$
The following proposition reduces Theorems \ref{t:main} and Theorem \ref{t:inert} to Theorem \ref{t:Qn} and Theorem \ref{t:Qodd} respectively.
\begin{prop}\label{p:equi}
In the above setting, if for any $f\in C([0,1])$
\begin{equation}\label{e:equiQ}
\lim_{T\to\infty}\frac{1}{|Q'(T)|}\sum_{x\in Q'(T)} f(x)=\int_0^1f(x)d\nu,\end{equation}
then $\Lambda_{\calC'}(T)\sim \tfrac{\pi^2}{3\log(2)}|Q'(T)|$ and for any $F\in C_c(T^1\calM)$
\begin{equation}\label{e:equiC}
\lim
_{T\to\infty}\frac{1}{\Lambda_{\calC'}(T)}\sum_{\gamma\in \calC'(T)}\int_\gamma F=\int_{T^1\calM} Fd\mu.
\end{equation}
Under the additional assumption $T^\beta e^{cT}\lesssim |\calC'(T)|\lesssim T^\alpha e^{cT}$ for some $c>0$ and $\beta\leq \alpha<\beta+1$ we also have
\begin{equation}\label{e:equiC2}
\lim_{T\to\infty}\frac{1}{|\calC'(T)|}\sum_{\gamma\in \calC'(T)}\frac{1}{\ell(\gamma)}\int_\gamma F=\int_{T^1\calM}fd\mu,\end{equation}
\end{prop}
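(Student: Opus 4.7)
The strategy is to pull everything back through Series's coding $p: \Sigma_r \to T^1\calM$ and apply the equidistribution hypothesis \eqref{e:equiQ}. For $F \in C_c(T^1\calM)$, set $g_F(\sigma) := \int_0^{r(\sigma)} F(p(\sigma,t))\,dt$. A direct bookkeeping over Series's correspondence between $\T$-orbits in $Q'(T)$ and primitive closed geodesics in $\calC'(T)$ (using that $\calC'$ is orientation-reversal invariant, and that $\hat\T$ sends $(x,-1/\bar x)$ to $(\T x,-1/\overline{\T x})$, so periodic $\hat\T$-orbits carry second coordinate $-1/\bar x$ throughout) yields
\begin{equation}\label{e:unf}
\sum_{\gamma \in \calC'(T)} \int_\gamma F \;=\; \sum_{x \in Q'(T)}\sum_{e=0,1} g_F(x,-1/\bar x, e).
\end{equation}
Specializing the same bookkeeping with the return time $r$ in place of $g_F$, and using $r = r_0 + r_0\circ\hat\T$ with $r_0(x,y,e) = -\tfrac12\log(xy)$ together with Series's length formula $\ell(x) = -2\sum_j\log(\T^j x)$ (which forces the identity $\prod_{x \in O} x(-\bar x) = 1$ on each $\T$-orbit $O$), gives
\[\Lambda_{\calC'}(T) \;=\; -4\sum_{x \in Q'(T)} \log x.\]

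The next step is to upgrade \eqref{e:equiQ} to joint equidistribution in $(x,-1/\bar x)$: for every $h \in C([0,1]^2)$,
\begin{equation}\label{e:joint}
\frac{1}{|Q'(T)|}\sum_{x \in Q'(T)} h(x,-1/\bar x) \;\longrightarrow\; \int_{[0,1]^2} h\,d\tilde\nu.
\end{equation}
This is the standard natural-extension argument: approximate $h$ by a cylinder function $h_N$ depending only on the first $N$ continued-fraction digits of $y$; since $-1/\bar x = [\overline{a_k,\ldots,a_1}]$ when $x = [\overline{a_1,\ldots,a_k}]$, those digits are determined by finitely many $\T$-iterates of $x$, and the $\T$-invariance of $Q'$ together with \eqref{e:equiQ} controls the resulting average, which limits to $\int h_N\,d\tilde\nu$ (the unique $\hat\T$-invariant lift of $\nu$). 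Letting $N\to\infty$ gives \eqref{e:joint}. Plugging $h = g_F$ into \eqref{e:joint}, unfolding $g_F$ against $d\tilde\nu = \tfrac{1}{\log 2}\frac{dx\,dy}{(1+xy)^2}$, and matching with the Liouville formula $d\mu = \tfrac{3}{\pi^2}\frac{dx\,dy}{(1+xy)^2}dt$ on $\Sigma_r$ (the two $e$-sheets together covering $T^1\calM$) produces the core asymptotic
\begin{equation}\label{e:star}
\sum_{\gamma \in \calC'(T)} \int_\gamma F \;\sim\; \frac{\pi^2}{3\log 2}\,|Q'(T)|\int_{T^1\calM} F\,d\mu.
\end{equation}

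For the length asymptotic, apply \eqref{e:equiQ} to the continuous truncations $f_K(x) = \min(-\log x, K)$; the classical $\int_0^1\frac{-\log x}{1+x}dx = \tfrac{\pi^2}{12}$ together with monotone convergence gives $\liminf \Lambda_{\calC'}(T)/|Q'(T)| \geq \tfrac{\pi^2}{3\log 2}$, and the matching upper bound comes from a uniform tail estimate $\sum_{x \in Q'(T),\,a_1(x)>M}\log a_1(x) = o(|Q'(T)|)$, obtained by applying \eqref{e:equiQ} to continuous approximations of $\mathbf{1}_{(1/(n+1),1/n)}$ and summing against $\nu((1/(n+1),1/n))\asymp 1/n^2$. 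Dividing \eqref{e:star} by the resulting $\Lambda_{\calC'}(T) \sim \tfrac{\pi^2}{3\log 2}|Q'(T)|$ yields \eqref{e:equiC}. For \eqref{e:equiC2}, an Abel summation
\[\sum_{\gamma \in \calC'(T)}\frac{1}{\ell(\gamma)}\int_\gamma F \;=\; \frac{S(T)}{T} + \int_0^T \frac{S(t)}{t^2}\,dt, \qquad S(T) := \sum_{\gamma \in \calC'(T)}\int_\gamma F,\]
combined with $\Lambda_{\calC'}(T)\sim T|\calC'(T)|$ — which holds under the bounds $T^\beta e^{cT}\lesssim|\calC'(T)|\lesssim T^\alpha e^{cT}$ with $\alpha<\beta+1$, since then the integration by parts $\Lambda = TN(T) - \int_0^T N(s)\,ds$ has $\int_0^T N(s)\,ds \lesssim T^\alpha e^{cT}$ negligible compared to $TN(T)\gtrsim T^{\beta+1}e^{cT}$ — gives $|Q'(T)|/T \sim \tfrac{3\log 2}{\pi^2}|\calC'(T)|$, so the main term $S(T)/T \sim |\calC'(T)|\int F\,d\mu$ dominates the integral term (smaller by a factor $1/T$) and delivers \eqref{e:equiC2}.

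The technical heart and principal obstacle is the natural-extension upgrade \eqref{e:joint}: although \eqref{e:equiQ} is stated only for continuous functions of $x$, joint equidistribution in $(x,-1/\bar x)$ is what is needed, and bridging this requires translating cylinder conditions on the second coordinate into iterated-shift conditions on the first, handled carefully on a countable-branch symbolic space. A parallel difficulty is the uniform tail estimate in the length step, since \eqref{e:equiQ} provides no rate and the test function $-\log x$ is unbounded at the origin.
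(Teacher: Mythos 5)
Your proposal follows essentially the same route as the paper: your natural-extension upgrade to joint equidistribution of $(x,-1/\bar x)$ with respect to $\tilde\nu$ is exactly the paper's Lemma \ref{l:equi} (proved by the same cylinder-set/shift argument), the unfolding of $\int_\gamma F$ through Series's cross-section with $f_F=g_F(\cdot,0)+g_F(\cdot,1)$ and the evaluation $\int 2r\,d\tilde\nu=-4\int_0^1\tfrac{\log x}{1+x}\,dx=\tfrac{\pi^2}{3\log 2}$ are identical, and your Abel summation for \eqref{e:equiC2} under the growth hypotheses is precisely the paper's identity \eqref{e:HF}. The one divergence is that you truncate the unbounded weight $-\log x$ and argue a tail bound where the paper simply applies the weak limit to $f_1=2r$ directly; this is a point the paper glosses over, and your treatment is the more careful one, though the step where you sum the cylinder estimates over all $n>M$ still needs a word on uniformity, since \eqref{e:equiQ} carries no rate.
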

%\begin{rem}
%This correspondence was used by Pollicott  \cite{Pollicott86} with $\calC$ the full set of closed geodesics in his proof of the equidistribution of the closed geodesics on the Modular surface.
%\end{rem}

Before we go on with the proof we need an intermediate step, showing that the equidistribution of quadratic irrationals $x\in Q$ on $([0,1],\nu)$ is actually equivalent to the equidistribution of pairs $(x,-1/\bar{x})$ in $([0,1]\times [0,1],\tilde{\nu})$.
\begin{lem}\label{l:equi}
The following are equivalent:
\begin{enumerate}
\item For any $f\in C([0,1])$
\[\lim_{T\to\infty}\frac{1}{|Q'(T)|}\sum_{x\in Q'(T)}f(x)=\int_0^1f(x)d\nu.\]
\item For any $f\in C([0,1]\times[0,1])$
\[\lim_{T\to\infty}\frac{1}{|Q'(T)|}\sum_{x\in Q'(T)} f(x,-1/\bar{x})= \int_0^1\int_0^1 f(x,y)d\tilde{\nu}.\]
\end{enumerate}
\end{lem}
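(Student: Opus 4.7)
The implication (2)$\Rightarrow$(1) is immediate on taking $f\in C([0,1])$ as a function on $[0,1]\times[0,1]$ constant in $y$: the $x$-marginal of $\tilde\nu$ is $\nu$. The substance lies in the converse.

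\emph{Strategy.} My plan is to reduce (2) to test functions $f=\chi_{C_1}(x)\chi_{C_2}(y)$ with $C_1=C_{c_1,\ldots,c_M}$ and $C_2=C_{b_1,\ldots,b_N}$ continued-fraction cylinders, and then use an orbit-level identity that converts the second coordinate $-1/\bar x$ into a shift of the first coordinate. Concretely, for $x=[\overline{a_1,\ldots,a_p}]$ of minimal period $p>M+N$, the CF expansion formula gives
\[ -1/\overline{\T^j x}=[\overline{a_j,a_{j-1},\ldots,a_1,a_p,\ldots,a_{j+1}}], \]
so $(\T^j x,-1/\overline{\T^j x})\in C_1\times C_2$ iff the length-$(M+N)$ consecutive block $(a_{j-N+1},\ldots,a_{j+M})$ equals $(b_N,b_{N-1},\ldots,b_1,c_1,\ldots,c_M)$, equivalently $\T^{j-N}x\in C^*$ for $C^*:=C_{b_N,\ldots,b_1,c_1,\ldots,c_M}$. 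Reindexing along each $\T$-orbit $O$ of $x$ yields $\sum_{z\in O}f(z,-1/\bar z)=\sum_{z\in O}\chi_{C^*}(z)$. Orbits of period $\leq M+N$ contribute only $o(|Q'(T)|)$ (for the sets $Q'$ of the paper the number of bounded-period quadratic irrationals with $\ell(x)\leq T$ is dominated by a strictly smaller exponential), so
\[ \sum_{x\in Q'(T)}f(x,-1/\bar x)=\sum_{x\in Q'(T)}\chi_{C^*}(x)+o(|Q'(T)|). \]

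\emph{Identifying the limit.} A direct check gives $\hat\T^N(C^*\times[0,1])=C_1\times C_2$, and the $\hat\T$-invariance of $\tilde\nu$ together with $\int_0^1(1+xy)^{-2}dy=(1+x)^{-1}$ yields
\[ \nu(C^*)=\tilde\nu(C^*\times[0,1])=\tilde\nu(C_1\times C_2)=\int f\,d\tilde\nu. \]
Bracketing $\chi_{C^*}$ between continuous functions (its boundary is $\nu$-null) and applying (1) to each, the right-hand side is $|Q'(T)|\,\nu(C^*)+o(|Q'(T)|)$, establishing (2) for product-cylinder $f$.

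\emph{Main obstacle.} The delicate step is extending uniformly to general $f\in C([0,1]\times[0,1])$, because the CF cylinder partition has infinitely many pieces clustering near $\{x=0\}\cup\{y=0\}$. I would control the tails by applying the same orbit identity to the single-digit cylinders $\{a_1(y)>K\}$: this converts $\#\{x\in Q'(T):a_1(-1/\bar x)>K\}$ into $\#\{x\in Q'(T):a_1(x)>K\}$, and (1) applied to a continuous approximation of $\chi_{\{a_1>K\}}$ bounds this by $|Q'(T)|\,\nu(\{a_1>K\})+o(|Q'(T)|)$, uniformly small in $K$. Combining this tail bound with the product-cylinder case and uniform approximation of $f$ on bounded-digit cylinders completes the extension.
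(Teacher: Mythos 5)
Your argument is correct and is essentially the paper's proof: your orbit-level reindexing identity (converting the condition $(\T^j x,-1/\overline{\T^j x})\in C_1\times C_2$ into $\T^{j-N}x\in C^*$) is exactly the paper's observation that $f\circ\hat{\T}^{N}=\id_{C^*\times[0,1]}$ combined with the invariance of orbit sums under $\hat{\T}$, and the limit is identified the same way via $\hat{\T}$-invariance of $\tilde{\nu}$. Your explicit tail control near $\{x=0\}\cup\{y=0\}$ is a more careful treatment of the approximation step that the paper handles in one line, but it is not a different route.
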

\begin{proof}
The fact that $(2)\Rightarrow (1)$ is obvious, we now show that $(1)\Rightarrow(2)$.

For $\mathbf{k}=(k_1,\ldots, k_n)$ consider the open interval
$$I_{\mathbf{k}}=\{x=[a_1,a_2,a_3,\ldots]|a_j=k_j,\; j=1,\ldots, n\},$$
and note that any continuous function on $[0,1]^2$ can be approximated by a linear combination of indicator functions of products $I_{\mathbf{k}}\times I_{\mathbf{k'}}$. (This follows from the fact that any open interval with rational endpoints can be written as a finite intersections of unions of sets of the form $I_{\mathbf{k}}$ and their complements).

It is thus sufficient to consider test functions of the form $f=\id_{I_{\mathbf{k}}\times I_{\mathbf{k}'}}$ for any $\mathbf{k}=(k_1,\ldots, k_n),\;\mathbf{k}'=(k_1,\ldots, k_{n'})$. That is, we need to show that
\[\lim_{T\to\infty}\frac{1}{|Q'(T)|}\sum_{x\in Q'(T)} \id_{I_{\mathbf{k}}\times I_{\mathbf{k}'}}(x,-1/\bar{x})= \tilde{\nu}(I_{\mathbf{k}}\times I_{\mathbf{k}'}).\]

Since $\hat{\T}$ acts on the continued fraction by a shift, for $f=\id_{I_{\mathbf{k}}\times I_{\mathbf{k}'}}$ we have that
$f\circ \hat{\T}^{n'}=\id_{I_{\mathbf{k''}}\times[0,1]}$ where $\mathbf{k}''=(k_{n'}',\ldots, k_1',k_1,\ldots, k_n)$.
On the other hand, since acting by $\hat{\T}$ only permutes the periodic $\T$-orbits (which all have the same length) and the set $Q'$ is by definition composed of complete $\T$-orbits, then any function $f$ on $[0,1]\times[0,1]$  satisfies
\[\frac{1}{|Q'(T)|}\sum_{x\in Q'(T)} f(x,-1/\bar{x})=\frac{1}{|Q'(T)|}\sum_{x\in Q'(T)} f\circ\hat{T}^n(x,-1/\bar{x}).\]
In particular for $f=\id_{I_{\mathbf{k}}\times I_{\mathbf{k}'}}$
\begin{eqnarray*}
\frac{1}{|Q'(T)|}\sum_{x\in Q'(T)} \id_{I_{\mathbf{k}}\times I_{\mathbf{k}'}}(x,-1/\bar{x})&=&\frac{1}{|Q'(T)|}\sum_{x\in Q'(T)} \id_{I_{\mathbf{k''}}\times[0,1]}(x,-1/\bar{x})\\
&=&\frac{1}{|Q'(T)|}\sum_{x\in Q'(T)} \id_{I_{\mathbf{k''}}}(x)
\end{eqnarray*}
We can now approximate $\id_{I_{\mathbf{k''}}}$ by continuous functions get that
\begin{eqnarray*}\lim_{T\to\infty}\frac{1}{|Q'(T)|}\sum_{x\in Q'(T)} \id_{I_{\mathbf{k''}}}(x)&=&\nu(I_{\mathbf{k''}})\\
&=&\tilde{\nu}(I_{\mathbf{k''}}\times [0,1])=\tilde{\nu}(I_{\mathbf{k}}\times I_{\mathbf{k'}}),
\end{eqnarray*}
where the last equality follows from the invariance of $\tilde{\nu}$ under $\hat{\T}$.
\end{proof}

\begin{proof}[Proof of Proposition \ref{p:equi}]
For the first part we follow Pollicott's argument from \cite[Page 436]{Pollicott86}.
To any continuous bounded $F\in C_c(T^1\calM)$ we let $f_F\in C((0,1)\times (0,1))$ be given by
\[f_F(x,y)=\int_0^{r(x,y)}(F(p(x,y,0,t))+F(p(x,y,1,t)))dt.\]
We then have
$$\int_{T^1\calM}Fd\mu=\frac{3}{\pi^2}\int_0^1\int_0^1 f_F(x,y)\frac{dxdy}{(1+xy)^2}=\frac{3\log(2)}{\pi^2}\int_0^1\int_0^1 f_Fd\tilde{\nu}$$
and for any pair $\gamma,\bar\gamma$ of orientation reversed geodesics
\[\int_\gamma F+\int_{\bar\gamma}F=\sum_{x\in Q_\gamma}f_F(x,-1/\bar{x}),\]
where $Q_\gamma\subset (0,1)$ is the set of endpoints (in $(0,1)$) of lifts of $\gamma$ to $\bbH$.

The result now follows from lemma \ref{l:equi} as
\begin{eqnarray*}\lim_{T\to\infty}\frac{1}{\Lambda_{\calC'}(T)}\sum_{\gamma\in \calC'(T)}\int_\gamma F&=&\lim_{T\to\infty}\frac{\sum_{x\in Q'(T)}f_F(x,-1/\bar{x})}{\sum_{x\in Q'(T)}f_1(x,-1/\bar{x})}\\ &=&\frac{\int_0^1\int_0^1 f_F d\tilde{\nu}}{\int_0^1\int_0^1 f_1 d\tilde{\nu}}=\int_{T_1\calM} Fd\mu.
\end{eqnarray*}
In particular, for $F=1$ we have $f_1(x,y)=2r(x,y)$ so that
\[\frac{\Lambda_{\calC'}(T)}{|Q'(T)|}=\frac{1}{|Q'(T)|}\sum_{x\in Q'(T)}r(x,-1/\bar{x}),\]
converges as $T\to\infty$ to
\[\int_0^1\int_0^1 2r(x,y)d\tilde{\nu}=-4\int_0^1\frac{\log(x)}{1+x}dx=\frac{\pi^2}{3\log(2)}.\]

Next, we show that \eqref{e:equiC} implies \eqref{e:equiC2}. Let
$$G_F(T)=\sum_{\gamma\in \calC'(T)}\int_\gamma F,\;\mbox{ and } H_F(T)=\sum_{\gamma\in \calC'(T)}\frac{1}{\ell(\gamma)}\int_\gamma F.$$
Summation by parts gives
\begin{equation}\label{e:HF}
H_F(T)=\frac{G_F(T)}{T}+\int^T \frac{G_F(t)}{t^2}dt.
\end{equation}
The bound $|\calC'(T)|\lesssim T^\alpha e^{cT}$ implies that $G_1(T)\lesssim T^{\alpha+1}e^{cT}$.
We can thus bound $|G_F(t)|\lesssim G_1(t)\lesssim t^{\alpha+1}e^{ct}$ under the integral in \eqref{e:HF} to get that
\[H_F(T)=\frac{G_F(T)}{T}+O(T^{\alpha-1}e^{cT}).\]
Using the lower bound $|\calC'(T)|\gtrsim T^\beta e^{cT}$ we get that $\frac{H_F(T)}{|\calC'(T)|}\sim \frac{G_F(T)}{T|\calC'(T)|}$ and in particular $\frac{G_1(T)}{T|\calC'(T)|}\sim 1$.
Finally, \eqref{e:equiC} implies that $\frac{G_F(T)}{G_1(T)}\sim \int_{T^1\calM}f$ so that indeed
\[\frac{H_F(T)}{|\calC'(T)|} \sim \frac{G_F(T)}{T|\calC'(T)|} \sim \int_{T^1\calM}f.\]

\end{proof}

\section{Proof of main Theorems}
Theorems \ref{t:main} and \ref{t:inert} follow directly from Theorems \ref{t:Qn} and \ref{t:Qodd} (together with Proposition \ref{p:equi}).
In order to prove Theorems \ref{t:Qn} and \ref{t:Qodd} it is enough to establish $\eqref{e:equiQn}$ and $\eqref{e:Qodd}$ for a dense set of test functions in $C([0,1])$. Let $\calB$ denote the Banach space of holomorphic functions on the disc $D_{3/2}(1)=\{z\in \bbC: |z-1|<3/2\}$ that are continuous on the boundary.
The family of test functions $\{f_{|_{[0,1]}}|f\in \calB\}$ is clearly dense in $C([0,1])$ (as it contains all polynomials).
For these test functions we will follow the approach of \cite{Pollicott86}, that is, we study the analytic continuation of a certain $\eta$-unction and use a suitable Tauberian theorem.

\subsection{The $\eta$-functions}
For a fixed test function $f\in \calB$ and a parameter $\theta\in (-1,1)$ we consider the functions $\eta_{f,\theta}(s)$ and $\eta_f^{\mathrm{odd}}(s)$ given by the series
\begin{equation}\label{e:eta}\eta_{f,\theta}(s)=\sum_{x\in Q}f(x)\sum_{k=1}^\infty 2\cos(\pi k\theta\Alt(x))e^{-k\ell(x)s},\end{equation}
and
\begin{equation}\label{e:etaodd}\eta_{f}^{\mathrm{odd}}(s)=2\sum_{x\in Q_{\mathrm{odd}}}f(x)\mathop{\sum_{k=1}^\infty}_{\mathrm{odd}}e^{-\frac{k\ell(x)s}{2}}.\end{equation}
We will show

\begin{prop}\label{t:etaodd}
The series $\eta_{f}^{\mathrm{odd}}(s)$ absolutely converges for $\Re(s)>1$, has a meromorphic continuation to the half plane $\Re(s)>1/2$ with one simple pole located at $s=1$ and residue
$\mathop{\mathrm{Res}}_{s=1}\eta_{f}^{\mathrm{odd}}=\frac{6\log(2)}{\pi^2}\int_0^1 fd\nu$.
\end{prop}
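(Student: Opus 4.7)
The plan is to express $\eta_f^{\mathrm{odd}}(s)$ via traces of resolvents of Mayer's transfer operator $\calL_s$ on $\calB$ and then to read off the continuation and residue from its spectral theory. First I would rewrite the sum by $\T$-orbits: for $x\in Q_{\mathrm{odd}}$ with minimal odd period $n$, setting $\lambda_x := e^{-\ell(x)/2} = \prod_{j=1}^n (\T^jx)^2$, the inner geometric sum collapses to $\lambda_x^s/(1-\lambda_x^{2s})$, so
\[\eta_f^{\mathrm{odd}}(s) = 2\sum_{\calO:\text{odd prim.\ period}} F(\calO)\frac{\lambda_\calO^s}{1-\lambda_\calO^{2s}},\qquad F(\calO):=\sum_{x\in\calO}f(x).\]
Absolute convergence for $\Re(s)>1$ then follows from the standard fact that Ruelle-type orbit sums for the Gauss map have abscissa of convergence $s=1$.

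The key step will be a trace identity. Define $R_\pm(s) := \Tr\bigl(M_f\calL_s(I\mp\calL_s)^{-1}\bigr)$, where $M_f$ is multiplication by $f$. I would combine the partial fraction $\calL_s[(I-\calL_s)^{-1}+(I+\calL_s)^{-1}] = 2\calL_s/(I-\calL_s^2)$ with Mayer's trace formula
\[\Tr(M_f\calL_s^n) = \sum_{d|n}\sum_{\calO:\text{prim.\ period }d}F(\calO)\frac{\lambda_\calO^{(n/d)s}}{1-\lambda_\calO^{n/d}},\]
restricted to odd $n=dk$ (which forces both $d$ and $k$ odd), and then expand $(1-\lambda_\calO^k)^{-1}$ as a geometric series. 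This should yield
\[R_+(s)+R_-(s) = \sum_{j=0}^\infty \eta_f^{\mathrm{odd}}(s+j),\]
which rearranges to
\[\eta_f^{\mathrm{odd}}(s) = R_+(s)+R_-(s) - \sum_{j=1}^\infty \eta_f^{\mathrm{odd}}(s+j).\]
The tail is holomorphic in $\Re(s)>0$: each summand with $j\ge 1$ lies in the region of absolute convergence $\Re>1$, and the double series converges normally on compacta.

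Meromorphic continuation of $\eta_f^{\mathrm{odd}}$ to $\Re(s)>1/2$ thus reduces to analysing $R_\pm$, where Mayer's theorem enters. His operator $\calL_s$ is nuclear of order zero on $\calB$ and $\det(I-\calL_s)\det(I+\calL_s)$ coincides with the Selberg zeta function of $\PSL_2(\bbZ)\bs\bbH$; its only zero in $\Re(s)>1/2$ is the simple zero of $\det(I-\calL_s)$ at $s=1$, coming from the Perron--Frobenius eigenvalue $\lambda_0(s)$ of $\calL_s$ crossing $1$, with right eigenfunction the Gauss density $\rho(x)=1/((1+x)\log 2)$ and left eigenfunctional $h\mapsto\int_0^1 h\,dx$. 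Therefore $R_-$ is holomorphic in $\Re(s)>1/2$ while $R_+$ has a unique simple pole there, at $s=1$, and so does $\eta_f^{\mathrm{odd}}$.

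Finally, writing $(I-\calL_s)^{-1} = P_0/(1-\lambda_0(s))+(\text{holo})$ near $s=1$ with the rank-one projector $P_0 h = \rho\int_0^1 h\,dy$, one has $\Tr(M_fP_0) = \int_0^1 f\rho\,dx = \int_0^1 f\,d\nu$; standard perturbation theory gives $\lambda_0'(1) = \int_0^1(\calL_s'|_{s=1}\rho)\,dx = -\tfrac{2}{\log 2}\int_1^\infty \tfrac{\log u}{u(u+1)}\,du = -\pi^2/(6\log 2)$, yielding
\[\mathop{\mathrm{Res}}_{s=1}\eta_f^{\mathrm{odd}}(s) = \frac{-1}{\lambda_0'(1)}\int_0^1 f\,d\nu = \frac{6\log 2}{\pi^2}\int_0^1 f\,d\nu.\]
The main obstacle I anticipate is invoking Mayer's factorization of the Selberg zeta to exclude any further zero of $\det(I\mp\calL_s)$ in the strip $1/2<\Re(s)\le 1$; the rest is bookkeeping and the classical entropy computation for the Gauss map.
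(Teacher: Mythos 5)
Your proposal is correct and follows essentially the same route as the paper: your $R_+(s)+R_-(s)=\Tr\bigl(M_f\calL_s[(I-\calL_s)^{-1}+(I+\calL_s)^{-1}]\bigr)$ is exactly the paper's $\tilde\eta_f^{\mathrm{odd}}(s)=-\partial_w\log\bigl(\det(1-\calL_{s,w})/\det(1+\calL_{s,w})\bigr)|_{w=0}$, and both arguments then rest on Mayer's factorization $Z(s)=\det(1-\calL_s)\det(1+\calL_s)$ of the Selberg zeta together with the Perron--Frobenius data $\rho$, $\lambda_0'(1)=-\pi^2/(6\log 2)$ at $s=1$. The only (inconsequential) slip is the trace formula for odd $n$, where the denominator is $1-(-1)^n\lambda_\calO^{n/d}=1+\lambda_\calO^{n/d}$, so the tail identity reads $R_++R_-=\sum_{j\ge0}(-1)^j\eta_f^{\mathrm{odd}}(s+j)$; the alternating signs do not affect holomorphy of the tail or the residue at $s=1$.
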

\begin{prop}\label{t:eta}
The series $\eta_{f,\theta}(s)$ absolutely converges for $\Re(s)>1$, has a meromorphic continuation to the half plane $\Re(s)>1/2$ with at most one simple pole in the half plane $\Re(s)>3/4$ located at $s_\theta=1-3|\theta|$ (when $|\theta|<1/12$). Moreover, in this case the residue
$$R_f(\theta)=\mathop{\mathrm{Res}}_{s=s_\theta}\eta_{f,\theta},$$
satisfies
$$R_f(\theta)=\frac{6\log(2)}{\pi^2}\int_0^1 fd\nu+O(|\theta|^{1/2}).$$
\end{prop}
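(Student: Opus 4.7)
The plan is to adapt Pollicott's transfer-operator strategy from \cite{Pollicott86}, using a one-parameter family of twisted Mayer operators to encode the alternating sum, and to pin down the exact pole location via the Mozzochi--Sarnak estimate \eqref{e:SM}. On $\calB$, introduce for $\epsilon=\pm 1$ the twisted transfer operators
\[
(\calL_{s,\theta}^{\epsilon}h)(z) = \sum_{a=1}^{\infty}\frac{e^{i\epsilon\pi\theta a}}{(a+z)^{2s}}\,h\!\left(\tfrac{1}{a+z}\right),
\]
and put $\calL_{s,\theta}:=\calL_{s,\theta}^{+}\calL_{s,\theta}^{-}$. Since $|e^{i\epsilon\pi\theta a}|=1$, Mayer's nuclearity argument applies verbatim: $\calL_{s,\theta}$ is nuclear of order $0$ on $\calB$ for $\Re s>1/2$, jointly analytic in $(s,\theta)$. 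Absolute convergence of $\eta_{f,\theta}(s)$ for $\Re s>1$ is immediate from $|2\cos|\le 2$ combined with the bound $|Q(T)|\lesssim e^{T}/T$.

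By the Atiyah--Bott trace formula applied to the uniformly contracting inverse branches $g_a(z)=1/(a+z)$,
\[
\Tr\bigl(M_f\,\calL_{s,\theta}^{\,n}\bigr) = \sum_{\T^{2n}x=x}\frac{f(x)\,e^{-s\ell_{2n}(x)}\,e^{i\pi\theta\Alt_{2n}(x)}}{1-(\T^{2n})'(x)^{-1}},
\]
where $M_f$ denotes multiplication by $f$. Grouping $2n$-periodic points by their primitive $\T$-orbit, expanding $1/(1-(\T^{2n})'^{-1})$ as a geometric series, and symmetrizing the $\theta\mapsto-\theta$ contributions via $e^{i\alpha}+e^{-i\alpha}=2\cos\alpha$, I obtain
\[
\eta_{f,\theta}(s) \;=\; \sum_{n\ge 1}\Tr\!\bigl(M_f(\calL_{s,\theta}^{\,n}+\calL_{s,-\theta}^{\,n})\bigr) \;+\; E_{f,\theta}(s),
\]
where $E_{f,\theta}$ (collecting the $k\ge 2$ terms of the geometric expansion, which carry extra exponential decay) is holomorphic in $\Re s>3/4$. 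The main sum equals $-\partial_{z}\log\det(I-zM_{f}\calL_{s,\theta})|_{z=1}$ plus its $\theta\mapsto-\theta$ counterpart; Mayer's nuclearity makes each determinant entire in $s$ for $\Re s>1/2$, yielding the desired meromorphic continuation.

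For the pole, Pollicott's spectral analysis gives that at $\theta=0$ the leading eigenvalue of $\calL_{s,0}=\calL_{s}^{2}$ is simple at $s=1$ with value $1$ and eigenvector the Gauss density, and Kato's analytic perturbation theory then yields a unique simple leading eigenvalue $\lambda(s,\theta)$ near $(1,0)$ and a candidate pole $s_\theta$ solving $\lambda(s_\theta,\theta)=1$. To identify $s_\theta=1-3|\theta|$ exactly, I would appeal to \eqref{e:SM} with $r=6\theta$: for $f\equiv 1$, the Mellin--Laplace transform of \eqref{e:SM} shows that $\eta_{1,\theta}(s)$ has a simple pole precisely at $s=1-|r|/2=1-3|\theta|$ when $|\theta|<1/12$, and no pole in $\Re s>3/4$ otherwise; simplicity then identifies this with the perturbed pole.

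Finally, the residue at $\theta=0$ is the standard Pollicott value $R_{f}(0)=\frac{6\log 2}{\pi^{2}}\int f\,d\nu$ (the factor $6$ vs.\ $3$ in $d\mu$ counting the two orientations of each $\T$-orbit). For $\theta\ne 0$, $R_f(\theta)$ is the pairing of $f$ against the normalized perturbed eigenfunction at $s=s_\theta$. The principal obstacle is the $O(|\theta|^{1/2})$ error bound: since $s_\theta=1-3|\theta|$ is only Lipschitz at $\theta=0$, Kato's machinery delivers only one-sided analytic perturbations and a naive Taylor argument gives mere continuity of $R_f$. I expect to extract the square-root estimate by a Cauchy-integral argument --- applying Cauchy's formula to $(s-s_\theta)\eta_{f,\theta}(s)$ on a circle of radius $|\theta|^{1/2}$ about $s_\theta$, and using the uniform spectral gap of $M_f\calL_{s,\theta}$ on $\Re s \ge 3/4+\delta$ --- and transferring the resulting estimate to the residue. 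Making this quantitative and uniform in $f\in\calB$ will be the main technical step.
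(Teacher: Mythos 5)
Your architecture matches the paper's: twisted Mayer operators $\calL^{\pm}_{s,\theta}$ composed into $\calL^{+}\calL^{-}$, nuclearity and the Fredholm determinant for the continuation past $\Re(s)=1$, handling of the $1/(1-e^{-\ell})$ denominators by peeling off a shifted (holomorphic) piece, and the Mozzochi--Sarnak estimate \eqref{e:SM} with $r=6\theta$ to pin the unique zero of the determinant at $s_\theta=1-3|\theta|$ (the paper does this by Laplace-transforming $\Lambda_\theta(t)=\sum_{\ell(\gamma)\le t}\ell(\gamma)e^{i\pi\theta\Psi(A_\gamma)}$, which corresponds to the weight $f_0(x)=-2\log x$ rather than $f\equiv 1$; with $f\equiv 1$ the Birkhoff sum is the word length, not $\ell$, so the transform of \eqref{e:SM} is not literally $\eta_{1,\theta}$). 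One technical slip: $-\partial_z\log\det(I-zM_f\calL)\vert_{z=1}=\sum_n\Tr((M_f\calL)^n)$, whose periodic-orbit weights are \emph{products} $\prod_j f(\T^jx)$, not the single $f(x)$ appearing in your trace formula; you want $\Tr\bigl(M_f\calL(I-\calL)^{-1}\bigr)$, or the paper's device of inserting $e^{wf}$ into the weight and taking $-\partial_w\log\det(1-\calT^\theta_{s,w})\vert_{w=0}$, which is what makes the pole structure that of the determinant's zero set (hence $f$-independent).

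The genuine gap is the residue estimate $R_f(\theta)=R_f(0)+O(|\theta|^{1/2})$, which you correctly flag as the main technical step but for which your proposed substitute does not work. A Cauchy integral of $(s-s_\theta)\eta_{f,\theta}(s)$ over a circle of radius $|\theta|^{1/2}$ about $s_\theta$ is problematic on its face (that circle contains $s=1$, the pole of the comparison function $\eta_{f,0}$, since $3|\theta|\ll|\theta|^{1/2}$), and more fundamentally any contour argument still requires a quantitative bound on $\eta_{f,\theta}-\eta_{f,0}$, which is precisely what is missing. The source of the exponent $1/2$ is not generic perturbation theory but the H\"older-$\tfrac12$ modulus of continuity of the twist: for $\Re(s)\ge c>3/4$ one has
\begin{equation*}
\bigl\|\calL^{\theta}_{s,w}-\calL^{0}_{s,w}\bigr\|\;\lesssim\;\sum_{a\ge1}\bigl|e^{i\pi\theta a}-1\bigr|\,a^{-2\Re(s)}\;\lesssim\;|\theta|\sum_{a\le|\theta|^{-1}}a^{-1/2}+\sum_{a>|\theta|^{-1}}a^{-3/2}\;\lesssim\;|\theta|^{1/2},
\end{equation*}
and the same bound holds for the $s$- and $w$-derivatives of the operator. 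Feeding this into the spectral projection $\calP^{\theta}_{s,w}=\frac{1}{2\pi i}\int_{|\xi-1|=r_0}(\calT^{\theta}_{s,w}-\xi)^{-1}d\xi$ gives $O(|\theta|^{1/2})$ control of the simple eigenvalue branch $\lambda_\theta(s,w)$ and its first derivatives; the residue is then $R_f(\theta)=\pd{\lambda_\theta}{w}(s_\theta,0)\big/\pd{\lambda_\theta}{s}(s_\theta,0)$, and the remaining displacement from $s_\theta$ to $1$ costs only $O(|\theta|)$ by analyticity of $\lambda_0$ in $s$. Without this operator-level H\"older estimate (or an equivalent), your argument establishes only continuity of $R_f$ at $\theta=0$, which is not enough for the Tauberian step in the proof of Theorem \ref{t:Qn}.
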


\begin{rem}
With a little more work one can replace the error term of $O(|\theta|^{1/2})$ by
$O(|\theta||\log\tfrac{1}{|\theta|}|^2)$, but for our purpose this weaker bound will be sufficient.
\end{rem}

\begin{rem}
For $\theta=0$, the equality $R_f(0)=\frac{6\log(2)}{\pi^2}\int_0^1 fd\nu$ follows directly from \cite[Proposition 2]{Pollicott86}.
For $\theta\neq 0$ and the specific test function $f_0(x)=-2\log(x)$, %the correspondence between closed geodesics and quadratic irrationals gives
%$$\eta_{f_0,\theta}(s)=\sum_{\gamma} \ell(\gamma)e^{i\pi \theta \Psi(A_\gamma)}e^{-s\ell(\gamma)},$$
%where the sum is over all closed geodesics (not just primitive). In this case,
Proposition \ref{t:eta} follows from \eqref{e:SM}
with an exact formula $R_{f_0}(\theta)=1=\frac{6\log(2)}{\pi^2}\int_0^1 f_0d\nu$ (see the proof of Proposition \ref{p:poles}). The proof of the general case will follow from these two cases together with a perturbation theory argument.
\end{rem}

\subsection{The Tauberian Theorem}
We postpone the proof of Propositions \ref{t:etaodd} and \ref{t:eta} to the following sections. We will now show how to use them together with an appropriate Tauberian theorem to prove Theorems \ref{t:Qn} and \ref{t:Qodd}. Specifically, we will need the following version of the Wienner-Ikehara Tauberian Theorem (see \cite[\S III, Theorem 4.2]{Korevaar04}).
\begin{thm*}[Wiener-Ikehara]
Let $S(t)$ vanish for $t<0$, be nondecreasing, continuous from the right, and such that the Laplace transform $g(s)=\int_0^\infty e^{-ts}dS(t)$ exists for $\Re(s)>1$. Suppose that for some constant $A$ and for any $\lambda>0$ the function
$g_x(y)=g(x+iy)-\frac{A}{x+iy-1}$ converges as $x\to 1^+$ to some limit function  $g_1(y)$ in $L^1(-\lambda,\lambda)$. Then $e^{-t}S(t)\to A$ as $t\to \infty$.
\end{thm*}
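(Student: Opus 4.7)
The plan is to follow the classical Wiener--Ikehara strategy, which has two ingredients: (i) a Fourier-analytic step deducing decay of smoothed averages of $e^{-t}\,dS(t)-A\,dt$ from the boundary regularity on $\Re(s)=1$, and (ii) a Tauberian step using the monotonicity of $S$ to pass from smoothed averages back to pointwise convergence.

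For (i), fix a parameter $\lambda>0$ and choose a non-negative kernel $K_\lambda$ on $\bbR$ whose Fourier transform $\hat K_\lambda$ is continuous and supported in $[-\lambda,\lambda]$ (for example a Fej\'er-type kernel). Consider the smoothed quantity
\[ I_\lambda(T)=\int_{-\infty}^\infty K_\lambda(T-u)\bigl(e^{-u}\,dS(u)-A\,du\bigr). \]
By Parseval, $I_\lambda(T)$ can be rewritten (in the limit $x\to 1^+$) as the inverse Fourier transform at $T$ of $\hat K_\lambda(y)\,g_x(y)$ restricted to $|y|\le\lambda$. The hypothesis $g_x\to g_1$ in $L^1(-\lambda,\lambda)$ allows the limit $x\to 1^+$ to be taken under the integral, replacing $g_x$ by $g_1$; since $\hat K_\lambda\,g_1$ is integrable and compactly supported, the Riemann--Lebesgue lemma then yields $I_\lambda(T)\to 0$ as $T\to\infty$, for every fixed $\lambda$.

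For (ii), use that $S$ is non-decreasing. For small $\eta>0$, approximate $\tfrac{1}{2\eta}\id_{[-\eta,\eta]}$ above and below by non-negative kernels of the form $K_\lambda$ (with the approximation error tending to zero as $\lambda\to\infty$). Combining these majorant/minorant kernels with step (i) and the monotonicity of $S$ sandwiches $e^{-T}\,\tfrac{1}{2\eta}\bigl(S(T+\eta)-S(T-\eta)\bigr)$ between quantities whose limits as $T\to\infty$ are $A\pm o_\lambda(1)$. Sending $T\to\infty$, then $\lambda\to\infty$, and finally $\eta\to 0$ (once more using monotonicity of $S$ to compare the averaged quantity to $e^{-T}S(T)$ itself) gives $e^{-T}S(T)\to A$.

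Main obstacle: the delicate part is the Tauberian transfer, since Fourier analysis controls only smoothed averages and gives no direct information about pointwise values of $e^{-T}S(T)$. One must engineer non-negative majorants and minorants of a short indicator by kernels with compactly supported Fourier transform, and carefully track quantitative errors as $\lambda\to\infty$ and $\eta\to 0$ in the right order. The $L^1$ convergence hypothesis on every bounded frequency interval (rather than a mere distributional statement) is exactly what is needed to justify the limit $x\to 1^+$ inside the integral on the Fourier side in step (i).
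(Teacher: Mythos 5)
This statement is not proved in the paper: it is quoted verbatim as a known result, with a citation to Korevaar's book (\S III, Theorem 4.2), so there is no ``paper proof'' to compare against. Your outline is the standard Wiener--Ikehara argument, essentially the one in the cited reference: smooth $e^{-u}\,dS(u)-Ae^{u}\cdot e^{-u}\,du$ against a kernel with compactly supported Fourier transform, pass to the boundary $x\to 1^+$ using the local $L^1$ convergence of $g_x$, apply Riemann--Lebesgue, and then use monotonicity of $S$ to upgrade from smoothed averages to pointwise convergence. As a strategy this is correct, and your closing remark correctly identifies why the $L^1$ hypothesis is exactly what is needed in step (i). Two points deserve care. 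First, in step (i) the interchange of the limit $x\to 1^+$ with the integral against $dS$ on the measure side is justified by monotone convergence ($e^{-xu}\uparrow e^{-u}$ for $u>0$, $K_\lambda\ge 0$, $dS\ge 0$); this should be said, since it is where the positivity hypotheses enter on that side.

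Second, and more substantively: a nontrivial non-negative function with Fourier transform supported in $[-\lambda,\lambda]$ cannot minorize $\tfrac{1}{2\eta}\id_{[-\eta,\eta]}$, because a minorant must be $\le 0$ outside $[-\eta,\eta]$ while a band-limited non-negative function cannot vanish on a set of positive measure unless it is identically zero. So the minorant kernel in step (ii) necessarily takes negative values, and its negative part contributes a term $\int K_\lambda^{-}(T-u)e^{-u}\,dS(u)$ of the wrong sign. The standard remedy --- present in Korevaar's proof and in Bochner's --- is to run the majorant direction first to obtain the a priori bound $e^{-T}S(T)=O(1)$ (via a bound on increments $e^{-T}(S(T)-S(T-\delta))$), and then use that bound together with the decay of the kernel to show the negative contribution is $o_\lambda(1)$. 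Your sketch as written asserts both kernels are non-negative, which cannot be arranged; with this correction the argument closes in the usual way, sending $T\to\infty$, then $\lambda\to\infty$, then $\eta\to 0$.
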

\begin{proof}[Proof of Theorem \ref{t:Qn}]
We will apply the Wiener-Ikehara Tauberian theorem for $S(t)=S_{f,n}(t)$ given by
\begin{equation}S(t)=\sum_{k|n}k\mathop{\sum_{x\in Q^+_{n/k}}}_{\ell(x)\leq t/k} f(x)\ell(x).\end{equation}
By adding to $f$ a sufficiently large multiple of $f_0(x)=-2\log(x)$ we can insure that $S(t)$ is nondecreasing.

The Laplace transform of $S(t)$ is given by
\[g(s)=\sum_{k|n}k\mathop{\sum_{x\in Q^+_{n/k}}} f(x)\ell(x)e^{-k\ell(x)s},\]
which absolutely converges for $\Re(s)>1$.
We can obtain $g(s)$ by integrating the derivative of $\eta_{f,\theta}$ against $e^{-i\pi n\theta}$, specifically we have
\begin{equation}\label{e:inteta}\int_{-1}^1\eta'_{f,\theta}(s)e^{-i\pi n\theta}d\theta=\left\lbrace\begin{array}{cc} -2g(s) & n\neq 0\\ -4g(s) & n=0\end{array}\right..\end{equation}
Using Proposition \ref{t:eta} we can write
\[\eta'_{f,\theta}(s)=\frac{-R_f(\theta)}{(s-1+3|\theta|)^2}+\phi(s),\]
with $\phi(s)$ holomorphic in $\Re(s)>3/4$ and $R_f(\theta)=0$ for $|\theta|>1/12$. Plugging this back in \eqref{e:inteta} we get
\[g(s)=c\int_{-1/12}^{1/12}\frac{R_f(\theta)e^{-i\pi n\theta}}{(s-1+3|\theta|)^2}d\theta+ \Phi(s),\]
with $\Phi(s)$ holomorphic in $\Re(s)>3/4$ and $c=\left\lbrace\begin{array}{cc}1/2 & n\neq 0\\ 1/4 & n=0\end{array}\right.$.

Taking $A_f=\frac{R_f(0)}{3}$ (respectively $\frac{R_f(0)}{6}$ if $n=0$), we have
\begin{eqnarray*}g_x(y)&=&g(x+iy)-\frac{A_f}{x+iy-1}\\&=&c\int_{-1/12}^{1/12}\frac{R_f(\theta)e^{-i\pi n\theta}-R_f(0)}{(iy+x-1+3|\theta|)^2}d\theta+\Phi(x+iy).\end{eqnarray*}

The bound
\[|R_f(\theta)e^{-i\pi n\theta}-R_f(0)|\leq |R_f(\theta)-R_f(0)|+|R_f(0)||e^{in\theta}-1|=O(\sqrt{\theta}),\]
implies that the function
\[g_1(y)=c\int_{-1/12}^{1/12}\frac{R_f(\theta)e^{-i\pi n\theta}-R_f(0)}{(iy+3|\theta|)^2}d\theta+\Phi(1+iy),\]
is in $L^1(-\lambda,\lambda)$ for any $\lambda>0$ and that $g_x\to g_1$ in $L^1(-\lambda,\lambda)$.
The Wiener-Ikehara Tauberian theorem now implies that
\[\sum_{k|n}k\mathop{\sum_{x\in Q^+_{n/k}}}_{\ell(x)<T/k} f(x)\ell(x)\sim A_fe^{T}.\]
Since the contribution of all $k>1$ to this sum is bounded by $O(T^2e^{T/2})$ we get
\[\sum_{x\in Q^+_{n}(T)} f(x)\ell(x)\sim A_fe^{T},\]
and summation by parts gives
\[\sum_{x\in Q_n^+(T)} f(x)\sim A_f\frac{e^{T}}{T}.\]

In particular, taking $f=1$ we get that
$|Q_n^+(T)|\sim A_1\frac{e^{T}}{T}$ and since $|Q_n(T)|=|Q_{-n}(T)|$ (as the Gauss map gives a bijection between them) we get that
\[|Q_n(T)|\sim  \frac{R_1(0)}{6}\frac{e^T}{T}=\frac{\log(2)}{\pi^2}\frac{e^T}{T}.\]
Finally, for any other $f\in \calB$,
\[\lim_{T\to\infty}\frac{1}{|Q_n^+(T)|}\sum_{x\in Q_n^+(T)} f(x)= \frac{A_f}{A_1}=\frac{R_f(0)}{R_1(0)}=\int_0^1 fd\nu.\]
\end{proof}
\begin{proof}[Proof of Theorem \ref{t:Qodd}]
From Proposition \ref{t:etaodd} we can write
\[\eta_f^{\mathrm{odd}}(s)=\frac{\frac{6\log(2)}{\pi^2}\int fd\nu}{1-s}+\phi(s),\]
with $\phi(s)$ holomorphic in $\Re(s)>\frac{1}{2}$.
The Wiener-Ikehara Tauberian theorem now implies that
\[2 \sum_{k\;\mathrm{odd}}\mathop{\sum_{y\in Q_{\mathrm{odd}}}}_{\frac{k\ell(x)}{2}\leq T}f(y)\sim \frac{6\log(2)}{\pi^2}\left(\int fd\nu\right) e^T,\]
and as before we can ignore the contribution of $k>1$ to get
\[\mathop{\sum_{y\in Q_{\mathrm{odd}}}}_{\ell(x)\leq T}f(y)\sim \frac{3\log(2)}{\pi^2}\left(\int fd\nu\right) e^{T/2}.\]
 \end{proof}

\subsection{The transfer operator}
In order to obtain the analytic continuation of $\eta_{f,\theta}$ and $\eta_f^{\mathrm{odd}}$, we relate them to the Fredholm determinant of a suitable Ruelle-Perron-Frobenius transfer operator.

Fix a test function $f\in \calB$. For any complex numbers $s,w\in \bbC$ with $\Re(s)>\tfrac{1}{2}$ let $\chi_{s,w}(z)=z^{2s}e^{wf(z)}$.
For any $\theta\in [-1,1]$ we define the Ruelle-Perron-Frobenius operator $\calL^\theta_{s,w}:\calB\to \calB$ by
\begin{equation}\label{e:transfer}\calL^\theta_{s,w}g(z)=\sum_{a=1}^\infty e^{i\pi a \theta }\chi_{s,w}(\frac{1}{a+z}) g(\frac{1}{a+z}),\end{equation}
and we denote by
\[\calT^\theta_{s,w}=\calL^\theta_{s,w}\calL^{-\theta}_{s,w}.\]
For small $\theta$ (and $w$) we think of $\calL^\theta_{s,w}$ as a perturbation of $\calL^0_{s,w}$ (and $\calL^0_{s,0}$) studied by Pollicott \cite{Pollicott86} and Mayer \cite{Mayer76,Mayer91}.

The same arguments as in \cite{Mayer76} show that this operator is a nuclear operator.
Specifically we show the following.
\begin{prop}
For $\Re(s)>\tfrac{1}{2}$ and any $n\in \bbN$
\begin{eqnarray*}\lefteqn{\Tr\left((\calT^\theta_{s,w})^n\right)=}\\
&& \sum_{\bfa\in \bbN^{2n}}e^{i\pi\theta\Alt(\bfa)}\frac{e^{-s\ell(\bfa)}}{1-e^{-\ell(\bfa)}}\exp\big(w\sum_{j=1}^{2n}f(\T^j[\overline{a_1,\ldots,a_{2n}}])\big)
\end{eqnarray*}
where $\Alt(\bfa)=\sum_{j=1}^{2n}(-1)^j a_j$ and $\ell(\bfa)=-2\sum_{j=1}^{2n} \log(\T^j[\overline{a_1,\ldots,a_{2n}}])$.
(Note that the expansion $[\overline{a_1,\ldots,a_{2n}}]$ here is not necessarily minimal.)
\end{prop}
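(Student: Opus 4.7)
The approach is to expand $(\calT^\theta_{s,w})^n$ as a sum of composition-type operators indexed by $\bfa\in\bbN^{2n}$ and then apply the Ruelle--Mayer fixed-point trace formula to each summand. Iterating the defining formula \eqref{e:transfer} $2n$ times produces, after straightforward book-keeping,
$$\bigl((\calT^\theta_{s,w})^n g\bigr)(z)=\sum_{\bfa\in\bbN^{2n}}e^{i\pi\theta\,\Alt(\bfa)}\,W_\bfa(z)\,g(\phi_\bfa(z)),$$
where $\phi_a(z)=1/(a+z)$ is the $a$-th inverse Gauss branch, $\phi_\bfa=\phi_{a_1}\circ\phi_{a_2}\circ\cdots\circ\phi_{a_{2n}}$, and $W_\bfa(z)$ is the product of $\chi_{s,w}$ evaluated at the successive $\phi$-iterates of $z$ along the chain. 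The alternating signs $\pm\theta$ from $(\calL^\theta\calL^{-\theta})^n$ produce the phase $e^{\pm i\pi\theta\,\Alt(\bfa)}$; by the cyclic invariance of the trace (equivalently, a relabelling of $\bfa$) the sign can be taken to be $+$.

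Next I would invoke Mayer's observation that each $\phi_a$ ($a\geq 1$) strictly contracts $D_{3/2}(1)$ into a proper subdisc. This guarantees that $\phi_\bfa$ is a strict contraction with a unique attracting fixed point $z_\bfa^*\in(0,1)$, and that the transfer operator is nuclear on $\calB$ for $\Re(s)>1/2$. The trace formula for a composition operator $K\colon g\mapsto W_\bfa(z)\,g(\phi_\bfa(z))$ — obtained by Taylor-expanding at $z_\bfa^*$ and observing that the matrix of $K$ in the monomial basis $\{(z-z_\bfa^*)^k\}_{k\ge 0}$ is triangular with diagonal entries $W_\bfa(z_\bfa^*)\,\phi_\bfa'(z_\bfa^*)^k$ — then gives
$$\Tr(K)=\sum_{k\ge 0}W_\bfa(z_\bfa^*)\,\phi_\bfa'(z_\bfa^*)^k=\frac{W_\bfa(z_\bfa^*)}{1-\phi_\bfa'(z_\bfa^*)}.$$
Summing over $\bfa$ (justified by absolute trace-norm convergence for $\Re(s)>1/2$) yields an expression for $\Tr((\calT^\theta_{s,w})^n)$ of exactly the desired shape.

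The final step is identification. By construction, the fixed point $z_\bfa^*$ is the periodic continued fraction $[\overline{a_1,\ldots,a_{2n}}]$, and the $2n$ intermediate points at which the factors of $W_\bfa$ and of $\phi_\bfa'$ are evaluated are precisely the points $\T^j[\overline{a_1,\ldots,a_{2n}}]$ for $j=1,\ldots,2n$. Since $\phi_a'(z)=-(a+z)^{-2}$ and there is an even number of factors, the chain rule gives
$$\phi_\bfa'(z_\bfa^*)=\prod_{j=1}^{2n}\bigl(\T^j[\overline{a_1,\ldots,a_{2n}}]\bigr)^2=e^{-\ell(\bfa)},$$
the $2n$ minus signs cancelling, while
$$W_\bfa(z_\bfa^*)=\prod_{j=1}^{2n}\chi_{s,w}\bigl(\T^j[\overline{a_1,\ldots,a_{2n}}]\bigr)=e^{-s\ell(\bfa)}\exp\Bigl(w\sum_{j=1}^{2n}f(\T^j[\overline{a_1,\ldots,a_{2n}}])\Bigr).$$
Substituting these expressions gives the stated formula.

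The substantive input is Mayer's contraction and nuclearity estimate on $\calB$; once these are granted, the single-operator trace formula is elementary. I expect the main obstacle to be purely book-keeping: tracking the precise order of composition so that the fixed point is identified with $[\overline{a_1,\ldots,a_{2n}}]$ (rather than a cyclic shift or its reversal), and confirming that the alternating phases from $(\calL^\theta\calL^{-\theta})^n$ combine — after the trace cyclic permutation — into $e^{i\pi\theta\Alt(\bfa)}$ with the correct global sign.
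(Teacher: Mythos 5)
Your proposal is correct and follows essentially the same route as the paper: expand $(\calL^\theta_{s,w}\calL^{-\theta}_{s,w})^n$ into branch operators $L_\bfa$ indexed by $\bfa\in\bbN^{2n}$ and apply the fixed-point trace formula to each, the only difference being that you re-derive the single-branch trace (via the triangular matrix in the monomial basis at the attracting fixed point) where the paper simply cites Mayer's computation \eqref{e:La}. Your handling of the sign of the phase via cyclic relabelling and the identification of the intermediate points with $\T^j[\overline{a_1,\ldots,a_{2n}}]$ are both correct.
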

\begin{proof}
For any $\bfa\in \bbN^{n}$ consider the operator $L_\bfa :\calB\to\calB$ given by
\[L_\bfa g(z)=g([a_1,\ldots, a_{n};z])\prod_{j=1}^{n}\chi_{s,w}([a_j,\ldots,a_{n};z]),\]
where we use the notation
\[[a_1,\ldots,a_{n};z]=\tfrac{1}{a_1+\frac{1}{\cdots+\frac{1}{a_{n}+z}}}.\]
In \cite[Page 199]{Mayer76}, Mayer computed the spectrum of the operators $L_{\bfa}$ and showed that their trace is given by
\begin{eqnarray}\label{e:La}\Tr(L_{\bfa})&=&\frac{\prod_{j=1}^{n}\chi_{s,w}(\T^{j}[\overline{a_1,\ldots,a_{n}}])}{1-(-1)^n\prod_{j=1}^{n}(\T^j[\overline{a_1,\ldots,a_{n}}])^2}\\
\nonumber &=&\frac{e^{-s\ell(\bfa)}}{1-(-1)^ne^{-\ell(\bfa)}}\exp(w\sum_{j=1}^{n}f(\T^j[\overline{a_1,\ldots,a_{n}}])).
\end{eqnarray}
For $\Re(s)>\tfrac{1}{2}$ all the sums absolutely converge and we can expand
\[(\calL^\theta_{s,w}\calL^{-\theta}_{s,w})^n=\sum_{\bfa\in \bbN^{2n}}e^{i\pi\theta\Alt(\bfa)}L_{\bfa},\]
hence,
\[\Tr\left((\calT^\theta_{s,w})^n\right)=\sum_{\bfa\in \bbN^{2n}}e^{i\pi\theta\Alt(\bfa)}\Tr(L_{\bfa}),\]
concluding the proof.
\end{proof}
Using the theory of Fredholm determinants for nuclear operators on Banach space we get
\begin{cor}
The function $Z_\theta(s,w)=\det(1-\calT^\theta_{s,w})$ is holomorphic in $w,s$ for $\Re(s)>\tfrac{1}{2}$ and is non-zero unless $1$ is an eigenvalue for $\calT^\theta_{s,w}$. Moreover, for $\Re(s)>1$ it is given by
\begin{eqnarray*}
\lefteqn{Z_\theta(s,w)=}\\
%&=&\exp(-\sum_{n=1}^\infty \frac{z^n}{n}\Tr\left([\calL^\theta_{s,w}\calL^{-\theta}_{s,w}]^n\right))\\
&& \exp\left(-\sum_{n=1}^\infty \frac{1}{n}\sum_{\bfa\in \bbN^{2n}}e^{i\pi\theta\Alt(\bfa)}\frac{e^{-s\ell(\bfa)}}{1-e^{-\ell(\bfa)}}\exp\big(w\sum_{j=1}^{2n}f(\T^j[\overline{a_1,\ldots,a_{2n}}])\big)\right).
\end{eqnarray*}
\end{cor}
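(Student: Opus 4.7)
The plan is to deduce the corollary from the preceding proposition by invoking the standard theory of Fredholm determinants for nuclear operators on Banach spaces (Grothendieck), together with the formal identity
\[\det(1-T)=\exp\Bigl(-\sum_{n=1}^\infty\frac{\Tr(T^n)}{n}\Bigr),\]
valid whenever the right-hand side converges.

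First I would establish that $\calT^\theta_{s,w}$ is a nuclear operator on $\calB$ depending jointly holomorphically on $(s,w)$ in the region $\{\Re(s)>\tfrac{1}{2}\}\times\bbC$. Following Mayer \cite{Mayer76,Mayer91}, each summand in the series defining $\calL^\theta_{s,w}$ is nuclear because the inverse branch $z\mapsto 1/(a+z)$ sends the closed disc $\overline{D_{3/2}(1)}$ strictly inside $D_{3/2}(1)$, and the nuclear norm of the $a$-th summand decays like $a^{-2\Re(s)}$ uniformly on compact subsets of the parameter space. Hence the series converges in nuclear norm for $\Re(s)>\tfrac{1}{2}$. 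Nuclearity of the composition $\calT^\theta_{s,w}=\calL^\theta_{s,w}\calL^{-\theta}_{s,w}$ is immediate, and joint holomorphy in $(s,w)$ follows from the holomorphy of each summand together with uniform nuclear-norm convergence on compact subsets of the parameter space.

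With nuclearity in hand, Grothendieck's theorem yields that $Z_\theta(s,w)=\det(1-\calT^\theta_{s,w})$ is a holomorphic function of $(s,w)$ throughout $\{\Re(s)>\tfrac{1}{2}\}\times\bbC$, and vanishes exactly when $1$ lies in the spectrum of $\calT^\theta_{s,w}$. Since nuclear operators are compact, every nonzero spectral value is an eigenvalue, giving the stated characterization of the zero set.

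For the explicit formula in $\Re(s)>1$ I would substitute the trace formula of the preceding proposition into
\[\log Z_\theta(s,w)=-\sum_{n=1}^\infty\frac{1}{n}\Tr\bigl((\calT^\theta_{s,w})^n\bigr).\]
The main (and essentially only) obstacle is justifying absolute convergence of the resulting double sum over $n$ and $\bfa\in\bbN^{2n}$. I would handle this by noting that for $\Re(s)>1$ the spectral radius of $\calT^\theta_{s,w}$ is strictly less than $1$ --- this follows by comparison with the self-adjoint case $\theta=0,\,w=0$ studied by Mayer, where the top eigenvalue equals $1$ precisely at $s=1$ and is strictly decreasing in $\Re(s)$, together with a straightforward domination $|e^{i\pi a\theta}\chi_{s,w}|\leq \chi_{\Re(s),\Re(w)}$ on the summands. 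Combined with the uniform lower bound $1-e^{-\ell(\bfa)}\geq 1-e^{-\ell_0}$ coming from the positive minimum period $\ell_0>0$, and the prime geodesic estimate to bound $\sum_\bfa e^{-\Re(s)\ell(\bfa)}$, this permits the interchange of summation and yields the displayed expansion for $Z_\theta(s,w)$.
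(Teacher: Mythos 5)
Your proposal is correct and follows essentially the same route as the paper, which simply invokes Mayer's nuclearity argument and the Grothendieck--Fredholm determinant theory after establishing the trace formula; you merely supply the standard details (nuclear-norm convergence of the series defining $\calL^\theta_{s,w}$, joint holomorphy, and domination of the trace expansion) that the paper leaves implicit. The only caveat, present in the paper as well, is that absolute convergence of the expansion $\log Z_\theta=-\sum_n \Tr((\calT^\theta_{s,w})^n)/n$ for $\Re(s)>1$ requires $w$ in a suitable neighborhood of $0$ (since $e^{wf}$ inflates the dominating operator's spectral radius), which suffices for the application since only $\partial_w$ at $w=0$ is ever used.
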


Next we consider the logarithmic derivatives
\begin{equation} \tilde\eta_{f,\theta}(s)=-\pd{\log(Z_\theta)}{w}(s,0),\end{equation}
and note that it is closely related to the function $\eta_{f,\theta}$ defied in \eqref{e:eta}.
\begin{prop}
$\eta_{f,\theta}(s)=\tilde\eta_{f,\theta}(s)-\tilde\eta_{f,\theta}(s+1)$.
\end{prop}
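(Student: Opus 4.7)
The plan is to compute $\tilde\eta_{f,\theta}(s) - \tilde\eta_{f,\theta}(s+1)$ directly and match it term-by-term with the defining series of $\eta_{f,\theta}(s)$. Differentiating the formula for $\log Z_\theta$ in the Corollary above with respect to $w$ at $w=0$ gives
\begin{equation*}
\tilde\eta_{f,\theta}(s) = \sum_{n=1}^\infty \frac{1}{n}\sum_{\bfa\in \bbN^{2n}} e^{i\pi\theta\Alt(\bfa)}\frac{e^{-s\ell(\bfa)}}{1-e^{-\ell(\bfa)}}\sum_{j=1}^{2n}f(\T^j x_\bfa),
\end{equation*}
where $x_\bfa := [\overline{a_1,\ldots,a_{2n}}]$. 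The elementary identity $\frac{e^{-s\ell}-e^{-(s+1)\ell}}{1-e^{-\ell}}=e^{-s\ell}$ telescopes away the denominator, so the difference $\tilde\eta_{f,\theta}(s)-\tilde\eta_{f,\theta}(s+1)$ is the same sum with $\frac{e^{-s\ell(\bfa)}}{1-e^{-\ell(\bfa)}}$ replaced by $e^{-s\ell(\bfa)}$.

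Next I reindex the $\bfa$-sum as a sum over $x\in Q$. The key observation is that $\bfa\mapsto x_\bfa$ is a bijection from $\bbN^{2n}$ onto $\{x\in Q:\,p(x)\mid 2n\}$, where $p(x)$ denotes the minimal continued-fraction period of $x$. Writing $K=2n/p(x)$, a direct parity bookkeeping gives $\ell(\bfa)=k'\ell(x)$ and $\Alt(\bfa)=k'\Alt(x)$, where $k'=K$ when $p(x)$ is even and $k'=K/2$ when $p(x)$ is odd (in the odd case $K$ is automatically even, $\Alt(\bfa)=0$, and $\Alt(x)=0$). In both cases $k'$ ranges over all positive integers and $K/n=2/p(x)$. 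Moreover, $\sum_{j=1}^{2n}f(\T^j x_\bfa)=K\sum_{y\in O(x)}f(y)$, where $O(x)$ is the $\T$-orbit. Substituting and simplifying yields
\begin{equation*}
\tilde\eta_{f,\theta}(s)-\tilde\eta_{f,\theta}(s+1) = 2\sum_{x\in Q}\frac{1}{p(x)}\sum_{y\in O(x)}f(y)\sum_{k'\geq 1}e^{i\pi\theta k'\Alt(x)}e^{-sk'\ell(x)}.
\end{equation*}

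Finally, I swap the roles of $x$ and $y$ using the symmetry $y\in O(x)\Leftrightarrow x\in O(y)$, along with the orbit invariance of $p$ and $\ell$, to rewrite the right-hand side as $\sum_{y\in Q}f(y)\frac{2}{p(y)}\sum_{k'\geq 1}e^{-sk'\ell(y)}\sum_{x\in O(y)}e^{i\pi\theta k'\Alt(x)}$. Because a single cyclic shift of the continued fraction flips the sign of $\Alt$, i.e. $\Alt(\T z)=-\Alt(z)$, the inner orbit sum evaluates to $p(y)\cos(\pi\theta k'\Alt(y))$ when $p(y)$ is even, and to $p(y)=p(y)\cos(0)$ when $p(y)$ is odd (automatic since $\Alt(y)=0$). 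The factor $p(y)$ cancels $\frac{1}{p(y)}$, and the expression collapses exactly to the series defining $\eta_{f,\theta}(s)$.

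The main obstacle is the bookkeeping in the reindexing step, where the even/odd parity of $p(x)$ changes the definition of $\ell(x)$ and of the multiplier $k'$; the point is that once one pushes both cases through carefully, they produce the same uniform identities, and the sign-flipping property $\Alt(\T z)=-\Alt(z)$ is then exactly what converts the one-sided exponential $e^{i\pi\theta k'\Alt(x)}$ into the symmetric $2\cos(\pi\theta k'\Alt(y))$ appearing in $\eta_{f,\theta}$.
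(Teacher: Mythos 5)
Your proof is correct and follows essentially the same route as the paper: telescope the $1-e^{-\ell(\bfa)}$ denominator, reindex the sum over words $\bfa\in\bbN^{2n}$ as a sum over points $x\in Q$ with the weight $\frac{1}{n}\cdot\frac{2n}{p(x)}=\frac{2}{p(x)}$, then swap the roles of $x$ and $y$ and use $\Alt(\T z)=-\Alt(z)$ to turn the orbit sum into $2\cos(\pi k\theta\Alt(y))$. Your parity bookkeeping in the odd-period case (where the multiplier is $K/2$ rather than $K$) is in fact slightly more careful than the paper's, which states $\ell(\bfa)=k\ell(x)$ uniformly.
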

\begin{proof}
For $\Re(s)>1$ we have
\begin{eqnarray*}
\lefteqn{\tilde\eta_{f,\theta}(s)-\tilde\eta_{f,\theta}(s+1)=}\\
&&\sum_{n=1}^\infty \frac{1}{n}\sum_{\bfa\in \bbN^{2n}}e^{i\pi\theta\Alt(\bfa)}e^{-s\ell(\bfa)}\left(\sum_{j=1}^{2n}f(\T^j[\overline{a_1,\ldots,a_{2n}}])\right).
\end{eqnarray*}
Here the expansions are not minimal, in particular, for any $\bfa\in \bbN^{2n}$ if $x=[\overline{a_1,\ldots,a_{2n}}]$ and $Q_x$ is the orbit of $x$ under the Gauss map, then $2n=k|Q_x|$ for some $k\in \bbN$, $\ell(\bfa)=k\ell(x)$, and $\Alt(\bfa)=k\Alt(x)$. We can thus rewrite this as
\begin{eqnarray*}
\tilde\eta_{f,\theta}(s)-\tilde\eta_{f,\theta}(s+1)=\sum_{x\in Q}\sum_{k=1}^\infty e^{i\pi\theta k\Alt(x)}e^{-sk\ell(x)}\frac{2}{|Q_x|}\sum_{y\in Q_x}f(y).
\end{eqnarray*}
For any $y=\T^j x\in Q_x$ we have $\ell(y)=\ell(x)$, $Q_x=Q_y$ and $\Alt(y)=(-1)^j\Alt(x)$. Changing the order of summation we get
\begin{eqnarray*}
\tilde\eta_{f,\theta}(s)-\tilde\eta_\theta(s+1)&=&\sum_{y\in Q}\sum_{k=1}^\infty e^{-sk\ell(y)}f(y)\frac{2}{|Q_y|}\sum_{x\in Q_y}e^{\pi i k \theta \Alt(x)}\\
&=&\sum_{y\in Q}\sum_{k=1}^\infty e^{-sk\ell(y)}f(y)2\cos(\pi k\theta \Alt(y))= \eta_{f,\theta}(s)
\end{eqnarray*}
\end{proof}

\subsection{Proof of Proposition \ref{t:eta}}
Since $\eta_{f,\theta}(s)=\tilde\eta_{f,\theta}(s)-\tilde\eta_{f,\theta}(s+1)$, the following establishes the first part of Proposition \ref{t:eta}.
\begin{prop}\label{p:poles}
$\tilde{\eta}_{f,\theta}(s)$  has a meromorphic continuation to $\Re(s)>\frac{1}{2}$ with at most one simple pole in $\Re(s)>\tfrac{3}{4}$ located at $s_\theta=1-3|\theta|$ when $|\theta|< \tfrac{1}{12}$ and no poles in $\Re(s)>\tfrac{3}{4}$ otherwise.
\end{prop}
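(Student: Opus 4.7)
The proof strategy exploits two observations: (i) the potential pole locations of $\tilde\eta_{f,\theta}(s)$ are independent of $f$, and (ii) for the special test function $f_0(x)=-2\log(x)$, the function $\tilde\eta_{f_0,\theta}(s)$ is precisely the logarithmic derivative of $Z_\theta(s,0)$, whose pole structure can be extracted from the trace formula estimate \eqref{e:SM}.

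For (i), since $\calT^\theta_{s,w}$ is nuclear and $Z_\theta(s,w)=\det(1-\calT^\theta_{s,w})$ is jointly holomorphic in $(s,w)$ for $\Re(s)>1/2$, the ratio
\[\tilde\eta_{f,\theta}(s)=-\frac{\partial_wZ_\theta(s,0)}{Z_\theta(s,0)}\]
is meromorphic on $\Re(s)>1/2$, with poles only at zeros of $Z_\theta(s,0)$; and since at $w=0$ the weight $\chi_{s,0}(z)=z^{2s}$ has no dependence on $f$, the function $Z_\theta(s,0)$ (hence the potential pole locations of $\tilde\eta_{f,\theta}$) depends only on $\theta$. For (ii), taking $f_0(z)=-2\log(z)$ gives $\chi_{s,w}(z)=z^{2s-2w}$, so $Z_\theta(s,w)|_{f_0}=Z_\theta(s-w,0)$ and therefore
\[\tilde\eta_{f_0,\theta}(s)=\partial_s\log Z_\theta(s,0),\]
whose poles are exactly the zeros of $Z_\theta(s,0)$ and are all simple, with residues equal to the orders of the zeros.

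It therefore suffices to pin down the pole structure of $\tilde\eta_{f_0,\theta}(s)$ in $\Re(s)>3/4$. Using $\eta_{f_0,\theta}(s)=\tilde\eta_{f_0,\theta}(s)-\tilde\eta_{f_0,\theta}(s+1)$ together with the absolute convergence of $\tilde\eta_{f_0,\theta}(s+1)$ for $\Re(s)>1/2$, this reduces to the pole structure of $\eta_{f_0,\theta}(s)$. Combining the identities $\sum_{x\in \T\text{-orbit of }y}f_0(x)=\ell(y)$ and $|\Alt(x)|=|\Psi(A_\gamma)|$ (together with the evenness of $\cos$), one rewrites
\[\eta_{f_0,\theta}(s)=\sum_{\gamma}\ell(\gamma)\cos(\pi\theta\Psi(A_\gamma))\,e^{-s\ell(\gamma)}+\Phi(s),\]
the sum over all primitive closed geodesics, with $\Phi(s)$ collecting the $k\ge 2$ power contributions, which is holomorphic in $\Re(s)>1/2$ by the prime geodesic theorem. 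Expanding $\cos$ via $e^{\pm i\pi\theta\Psi(A_\gamma)}$ and applying \eqref{e:SM} with $r=\pm 6\theta$, a standard Mellin/partial-summation argument converts the partial-sum asymptotic
\[\sum_{\gamma\in\calC(T)}\ell(\gamma)e^{i\pi\theta\Psi(A_\gamma)}=\frac{e^{(1-3|\theta|)T}}{1-3|\theta|}+O\!\left(e^{3T/4}\log(1/|\theta|)\right)\]
into a simple pole of the associated Dirichlet series at $s_\theta=1-3|\theta|$ with residue $1$ for $|\theta|<1/12$, and holomorphy in $\Re(s)>3/4$ for $|\theta|>1/12$.

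The main technical difficulty is the rigorous extraction of the analytic continuation and simple-pole structure of the Dirichlet series from the partial-sum estimate \eqref{e:SM}: one must isolate the main pole from the error term, verify that the $k\ge 2$ power contributions remain holomorphic in $\Re(s)>1/2$, and handle the boundary case $|\theta|=1/12$, where $s_\theta=3/4$ lies on the edge of the claimed region. Once this is done for $f_0$, observation (i) immediately yields the full statement: the zero set of $Z_\theta(s,0)$ in $\Re(s)>3/4$ is determined (a simple zero at $s_\theta$ for $|\theta|<1/12$, none otherwise), and for any $f\in\calB$ the function $\tilde\eta_{f,\theta}$ can have poles only at these zeros, with order at most the multiplicity of the zero, giving at most a simple pole at $s_\theta$ (for $|\theta|<1/12$) and no poles otherwise.
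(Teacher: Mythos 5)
Your proposal is correct and follows essentially the same route as the paper: poles of $\tilde\eta_{f,\theta}$ can occur only at zeros of $Z_\theta(s,0)$ (hence are independent of $f$), and the zero set is pinned down by passing to $\partial_s\log Z_\theta(s,0)$ — equivalently the geodesic Dirichlet series $\sum_\gamma \ell(\gamma)e^{i\pi\theta\Psi(A_\gamma)}e^{-s\ell(\gamma)}$ — and extracting the simple pole at $s_\theta=1-3|\theta|$ from the partial-sum estimate \eqref{e:SM} via a Laplace-transform/partial-summation argument, discarding the $k\ge 2$ contributions as holomorphic in $\Re(s)>1/2$. The paper carries out the "standard Mellin" step you flag by writing $\zeta_\theta'(s)-\tfrac{s}{s_\theta(s-s_\theta)}$ as $s\int_0^\infty e^{-st}\bigl(\tfrac{e^{s_\theta t}}{s_\theta}-\Lambda_\theta(t)\bigr)dt$ with integrand $O(e^{3t/4})$, which is exactly the isolation of the main pole you describe.
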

\begin{proof}
Since $Z_\theta(s,w)$ is analytic in $\Re(s)>\frac{1}{2}$ we get that $\tilde{\eta}_{f,\theta}(s)$ is meromorphic there, and it can have a pole only at values of $s$ for which $1$ is an eigenvalue of $\calT^\theta_{s,0}$ (notice that this no longer depends on the test function $f$).
Next, note that $1$ is a (simple) eigenvalue of $\calT^\theta_{s,0}$ if and only if $s$ is a (simple) zero of $Z_\theta(s,0)$.

Now, for $\Re(s)>1$ we have
$Z_\theta(s,0)= \exp\left(-\tilde{\zeta}_\theta(s)\right)$,
with
\begin{eqnarray*}\tilde{\zeta}_\theta(s)&=&\sum_{n=1}^\infty \frac{1}{n}\sum_{\bfa\in \bbN^{2n}}e^{i\pi\theta\Alt(\bfa)}\frac{e^{-s\ell(\bfa)}}{1-e^{-\ell(\bfa)}}\\
%&=&\sum_{n=1}^\infty \frac{1}{n}\sum_{\bfa\in \bbN^{2n}_*}\sum_{k=1}^\infty e^{i\pi\theta k\Alt(\bfa)}\frac{e^{-sk\ell(\bfa)}}{1-e^{-k\ell(\bfa)}}
\end{eqnarray*}
Let $\zeta_\theta(s)=\tilde{\zeta}_\theta(s)-\tilde{\zeta}_\theta(s+1)$ and note that, using the correspondence between closed geodesics and quadratic irrationals, we can also write this as
\[\zeta_\theta(s)=\sum_{\gamma} e^{i\theta \Psi(A_\gamma)}e^{-s\ell(\gamma)},\]
where the sum is over all closed geodesics (not just primitive).% and $A_\gamma\in \PSL_2(\bbZ)$ is in a conjugacy class corresponding to $\gamma$.
%Indeed, let $\calC'$ denote a subset of all primitive closed geodesics containing one out of each pair of orientation reversed geodesics, then
%\begin{eqnarray*}
%\zeta_\theta(s)&=& \sum_{\gamma\in \calC'}\sum_{n=1}^\infty \frac{1}{n}\mathop{\sum_{\bfa\in \bbN^{2n}_*}}_{[\overline{a_1,\ldots,a_{2n}}]\in Q_\gamma}\sum_{k=1}^\infty e^{i\pi\theta k\Alt(\bfa)}\frac{e^{-sk\ell(\bfa)}}{1-e^{-k\ell(\bfa)}}\\
%&=& \sum_{\gamma\in \calC'}\sum_{k=1}^\infty \frac{e^{-sk\ell(\gamma)}}{1-e^{-k\ell(\gamma)}}\sum_{n=1}^\infty \frac{1}{n}\mathop{\sum_{\bfa\in \bbN^{2n}_*}}_{[\overline{a_1,\ldots,a_{2n}}]\in Q_\gamma} e^{i\pi\theta k\Alt(\bfa)}\\
%&=& \sum_{\gamma\in \calC'}\sum_{k=1}^\infty \frac{e^{-sk\ell(\gamma)}}{1-e^{-k\ell(\gamma)}} (e^{i\pi\theta k\Psi(A_\gamma)}+e^{-i\pi\theta k\Psi(A_\gamma)})\\
%&=& \sum_{\gamma\in \calC'}\sum_{k=1}^\infty \left(\frac{e^{-sk\ell(\gamma)}}{1-e^{-k\ell(\gamma)}} e^{i\pi\theta k\Psi(A_\gamma)}+\frac{e^{-sk\ell(\bar\gamma)}}{1-e^{-k\ell(\bar\gamma)}} e^{i\pi\theta k\Psi(A_{\bar\gamma})}\right)\\
%&=& \sum_\gamma \frac{e^{-s\ell(\gamma)}}{1-e^{-\ell(\gamma)}} e^{i\pi\theta \Psi(A_\gamma)}\\
%\end{eqnarray*}

Let $\Lambda_\theta(T)=\sum_{\ell(\gamma)\leq T}\ell(\gamma)e^{i\pi\theta\Psi(A_\gamma)}$. Then for $\Re(s)>1$ we can write
\begin{eqnarray*}
\zeta'_\theta(s)-\frac{s}{s_\theta(s-s_\theta)}&=& -\int_0^\infty e^{-st}d\Lambda_\theta(t)dt+\frac{s}{s_\theta}\int_0^\infty e^{(s_\theta-s)t}dt\\
&=& -s\int_0^\infty e^{-st}\Lambda_\theta(t)dt+\frac{s}{s_\theta}\int_0^\infty e^{(s_\theta-s)t}dt\\
&=& s\int_0^\infty e^{-st}\big(\frac{e^{s_\theta t}}{s_\theta}-\Lambda_\theta(t)\big)dt.
\end{eqnarray*}
Now \eqref{e:SM} implies that $|\frac{e^{s_\theta t}}{s_\theta}-\Lambda_\theta(t)|=O(e^{3t/4})$ so the integral on the right absolutely converges for $\Re(s)>3/4$, and hence defines an analytic function there.

We thus see that $\zeta'_\theta(s)$ (and hence also $\tilde{\zeta}_\theta'(s)$) has a meromorphic continuation to
the half plane $\Re(s)>3/4$ with a single simple pole at $s=s_\theta$ and residue $\mathrm{Res}_{s=s_\theta}\zeta_\theta'=1$ (when $s_\theta>3/4$).
Consequently, $Z_\theta(s,0)=\exp(-\tilde{\zeta}_\theta(s))$ does not vanish in $\Re(s)>\frac{3}{4}$ except for one simple zero at $s_\theta=1-3|\theta|$.
\end{proof}

For any $\theta\in(-\tfrac{1}{12},\tfrac{1}{12})$ we have
$$R_f(\theta)=\mathrm{Res}_{s=s_\theta}\eta_{f,\theta}=\mathrm{Res}_{s=s_\theta}\tilde{\eta}_{f,\theta}.$$
Since \cite[Proposition 2]{Pollicott86} implies that $R_f(0)=\frac{6\log(2)}{\pi^2}\int_0^1 fd\nu$, the following proposition gives the second half of Proposition \ref{t:eta}.
\begin{prop}\label{p:modulus}
There is some $\delta>0$ such that
$$R_f(\theta)=R_f(0)+O(|\theta|^{1/2}),$$%|\theta|\log(\tfrac{1}{|\theta|})^2,$$
uniformly for $\theta\in[-\delta,\delta]$
\end{prop}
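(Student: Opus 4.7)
The plan is to express $R_f(\theta)$ via the spectral data of the nuclear operator $\calT^\theta_{s,0}$ near its simple eigenvalue $1$, and then use a H\"older bound on the family $\theta \mapsto \calT^\theta_{s,0}$ to conclude.

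First I would derive a spectral formula. By Proposition \ref{p:poles}, $1$ is a simple eigenvalue of $\calT^\theta_{s_\theta,0}$, so analytic perturbation in the $s$-variable produces an analytic branch $\lambda_\theta(s)$ with $\lambda_\theta(s_\theta)=1$ and a rank-one spectral projection $P_\theta(s)$ satisfying
\[(1 - \calT^\theta_{s,0})^{-1} = \frac{P_\theta(s)}{1 - \lambda_\theta(s)} + R_\theta(s),\]
with $R_\theta(s)$ analytic near $s_\theta$. Combined with the standard log-derivative identity $\tilde\eta_{f,\theta}(s) = \Tr\bigl((1-\calT^\theta_{s,0})^{-1}\,\partial_w\calT^\theta_{s,w}|_{w=0}\bigr)$ for Fredholm determinants, taking the residue at $s = s_\theta$ yields
\[R_f(\theta) = -\frac{\Tr\bigl(P_\theta(s_\theta)\, \partial_w \calT^\theta_{s_\theta,w}|_{w=0}\bigr)}{\lambda'_\theta(s_\theta)}.\]

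Next I would establish $\|\calT^\theta_{s,0} - \calT^0_{s,0}\|_{\calB \to \calB} = O(|\theta|^{1/2})$, and similarly for $\partial_w\calT^\theta_{s,w}|_{w=0}$, uniformly for $s$ in a complex neighborhood of $1$. Writing $\calT^\theta_{s,0}$ as a double sum over $(a,b) \in \bbN^2$ with $\theta$-dependence concentrated in weights $e^{i\pi(a-b)\theta}$, the interpolation inequality $|e^{i\pi(a-b)\theta}-1| \leq C(a+b)^{1/2}|\theta|^{1/2}$ combined with the pointwise bound $|\chi_{s,0}(1/(a+z))| \lesssim a^{-2\Re(s)}$ from Mayer's nuclearity estimate reduces the matter to convergence of $\sum_{a,b}(a+b)^{1/2}a^{-2\Re(s)}b^{-2\Re(s)}$, which holds for $\Re(s) > 3/4$. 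The Riesz formula $P_\theta(s) = \frac{1}{2\pi i}\oint_\Gamma (z-\calT^\theta_{s,0})^{-1}dz$ over a small loop $\Gamma$ enclosing $1$, together with the resolvent identity, then transfers this bound to $P_\theta(s)$; the same estimate passes to $\lambda_\theta(s)=\Tr(\calT^\theta_{s,0}P_\theta(s))$ and, via Cauchy's formula in $s$, to $\lambda'_\theta(s)$, all uniformly for $s$ near $1$.

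Combining these, since $|s_\theta - 1| = 3|\theta|$ and $\lambda'_0(1) \neq 0$ (because $s=1$ is a simple zero of $Z_0(\cdot,0)$ by Proposition \ref{p:poles}), the quotient formula above is H\"older-continuous through $\theta = 0$ with exponent $1/2$, which gives $R_f(\theta) = R_f(0) + O(|\theta|^{1/2})$. The main obstacle is the H\"older operator-norm estimate: the target point $s=1$ lies exactly on the boundary of absolute convergence for a Lipschitz-in-$\theta$ bound, which forces the weaker exponent $1/2$. Everything else is routine perturbation theory for an isolated simple eigenvalue of a nuclear operator.
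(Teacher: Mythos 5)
Your proposal is correct and follows essentially the same route as the paper: both express $R_f(\theta)$ as a ratio built from the perturbed simple eigenvalue of $\calT^\theta_{s,0}$ near $1$ (the paper via $\pd{\lambda_\theta}{w}/\pd{\lambda_\theta}{s}$ evaluated through a Riesz projection applied to a fixed eigenfunction, you via $\Tr(P_\theta\,\partial_w\calT)/\lambda_\theta'$), and both rest on the same key $O(|\theta|^{1/2})$ operator-norm modulus of continuity, which the paper proves by splitting $\sum_a|e^{i\pi a\theta}-1|a^{-2\Re(s)}$ at $a=|\theta|^{-1}$ and you prove by the equivalent interpolation $|e^{i\pi(a-b)\theta}-1|\lesssim(a+b)^{1/2}|\theta|^{1/2}$. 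The only genuine (and harmless) variation is that you recover the H\"older bound for $\lambda_\theta'$ from that for $\lambda_\theta$ by Cauchy's formula in $s$, whereas the paper separately bounds $\norm{\pd{}{s}(\calT^\theta_{s,0}-\calT^0_{s,0})}$.
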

For the proof we will use some perturbation theory of compact operators (see e.g.  \cite{Kato82} for some of the standard arguments). In particular, we will need the following estimates for the modulus of continuity of the operator $\calT^\theta_{s,w}$ with respect to the $\theta$ parameter, as well as the modulus of its partial derivatives.
%\begin{lem}\label{l:modulus}
%As $\theta\to 0$ we have
%\[\norm{\calT^\theta_{s,0}-\calT^0_{s,0}}\lesssim |\theta|\log(\frac{1}{|\theta|}).\]
%\[\norm{\pd{}{w}_{|_{w=0}}(\calT^\theta_{s,w}-\calT^0_{s,w})}\lesssim |\theta|\log(\frac{1}{|\theta|}).\]
%\[\norm{\pd{}{s}(\calT^\theta_{s,0}-\calT^0_{s,0})}\lesssim |\theta|(\log(\frac{1}{|\theta|}))^2.\]
%uniformly in the half plane $\Re(s)\geq 1$.
%\end{lem}
%\begin{proof}
%Since $\calT^\theta_{s,w}=\calL^\theta_{s,w}\calL^{-\theta}_{s,w}$ it is enough to prove these bounds for $\calL^{\theta}_{s,w}$.
%
%For the operator $\pd{}{s}\calL^\theta_{s,0}$, for any $0\neq g\in \calB$
%\begin{eqnarray*}
%\pd{}{s}(\calL^\theta_{s,0}-\calL^0_{s,0})g(z)&=&\sum_{a=1}^\infty (e^{i\theta a}-1) g(\frac{1}{a+z})\log(\frac{1}{a+z})(\frac{1}{a+z})^{2s}\\
%&\lesssim & \sum_{a=1}^\infty|e^{i\theta a}-1|\frac{\log(\frac{1}{a})}{a^{2}}\\
%&\lesssim & \norm{g}\left(|\theta| \sum_{a\leq |\theta|^{-1}} \log(1/a)a^{-1}+\sum_{a\geq  |\theta|^{-1}} \log(1/a)a^{-2}\right)\\
%&\lesssim & \norm{g} |\theta|\log(1/|\theta|)^2.
%\end{eqnarray*}
%Hence if $\norm{g}=1$ then
%\[\norm{\pd{}{s}(\calL^\theta_{s,0}-\calL^0_{s,0})g}\lesssim |\theta|\log(1/|\theta|)^2,\]
%and taking supremum over all such $g\in \calB$ concludes the proof.
%
%For the operators $\calL^\theta_{s,0}$ and $\pd{}{w}\calL^\theta_{s,w}$ we can use the same argument,
%where in the first equation instead of the term $\log(\frac{1}{a+z})$ we have a constant resulting in a slightly better bound of $|\theta|\log(1/|\theta|)$.
%
%\end{proof}

\begin{lem}\label{l:modulus}
For $\theta$ sufficiently small we can bound $\norm{\pd{}{w}_{|_{w=0}}(\calT^\theta_{s,w}-\calT^0_{s,w})}$,
$\norm{\calT^\theta_{s,0}-\calT^0_{s,0}}$, and $\norm{\pd{}{s}(\calT^\theta_{s,0}-\calT^0_{s,0})}$, by $O(\sqrt{|\theta|})$
uniformly in any half plane $\Re(s)\geq c> 3/4$.
\end{lem}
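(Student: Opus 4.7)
The plan is to reduce the three bounds on $\calT^\theta_{s,w}-\calT^0_{s,w}$ and its first partial derivatives at $w=0$ to corresponding bounds on $\calL^\theta-\calL^0$ (and its partial derivatives), and then to obtain those by a direct estimate on the series defining $\calL^\theta_{s,w}$. The reduction uses the product identity
\[\calT^\theta_{s,w}-\calT^0_{s,w}=(\calL^\theta_{s,w}-\calL^0_{s,w})\calL^{-\theta}_{s,w}+\calL^0_{s,w}(\calL^{-\theta}_{s,w}-\calL^0_{s,w}),\]
and its analogues after differentiating once in $w$ or $s$ via the Leibniz rule. Each of the finitely many resulting terms contains exactly one ``difference'' factor of the form $\calL^{\pm\theta}-\calL^0$ or one of its derivatives, together with factors of $\calL^{\pm\theta}$ or its derivatives. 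Since $|e^{\pm i\pi a\theta}|=1$, each such ``extra'' operator is dominated series-term-by-series-term by its $\theta=0$ analog, which (by Mayer's analysis) is uniformly bounded on $\calB$ for $\Re(s)\geq c>1/2$. Thus the lemma reduces to the $O(|\theta|^{1/2})$ bound on each of the three difference operators.

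The core estimate is the bound on $\calL^\theta_{s,0}-\calL^0_{s,0}$ itself. Equipping $\calB$ with the sup norm on the closed disc $\overline{D_{3/2}(1)}$, for any $g\in \calB$ and $z$ in that disc,
\[|(\calL^\theta_{s,0}-\calL^0_{s,0})g(z)|\leq \norm{g}_\infty\sum_{a=1}^\infty |e^{i\pi a\theta}-1|\,|a+z|^{-2\Re(s)}.\]
Since $\Re(a+z)\geq a-\tfrac12$, the modulus $|a+z|$ is comparable to $a$ uniformly in $z$, and it remains to estimate $\sum_{a=1}^\infty |e^{i\pi a\theta}-1|a^{-2\sigma}$ for $\sigma\geq c>3/4$. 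I split the sum at $a\sim 1/|\theta|$, using $|e^{i\pi a\theta}-1|\leq \pi a|\theta|$ for $a\leq 1/|\theta|$ and $|e^{i\pi a\theta}-1|\leq 2$ otherwise, to obtain
\[\sum_{a=1}^\infty |e^{i\pi a\theta}-1|\,a^{-2\sigma}\lesssim |\theta|\sum_{a\leq 1/|\theta|} a^{1-2\sigma}+\sum_{a>1/|\theta|} a^{-2\sigma}\lesssim |\theta|^{2\sigma-1},\]
which for $\sigma\geq c>3/4$ is $O(|\theta|^{2c-1})=O(|\theta|^{1/2})$, with some room to spare.

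The two derivative bounds follow from essentially the same estimate. Differentiating in $w$ at $w=0$ introduces an extra factor $f(1/(a+z))$ in each summand, uniformly bounded because $f\in \calB$ is bounded on the closed disc. Differentiating in $s$ introduces a factor $-2\log(a+z)$, growing only logarithmically in $a$; the resulting $\log a$ in the summand worsens the series bound by at most a factor $\log(1/|\theta|)$, which is absorbed by the slack in the strict inequality $2c-1>1/2$. The main obstacle is the series estimate itself: on the full half plane $\Re(s)>3/4$ the naive termwise bound $|e^{i\pi a\theta}-1|\leq \pi a|\theta|$ alone is insufficient, since $\sum a^{1-2\sigma}$ diverges for $\sigma<1$. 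The splitting at $a\sim 1/|\theta|$ interpolates between the linear and the trivial bounds on $|e^{i\pi a\theta}-1|$, and is precisely what produces the $\sqrt{|\theta|}$ (rather than the more natural $|\theta|$) exponent in this half plane.
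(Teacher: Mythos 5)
Your proposal is correct and follows essentially the same route as the paper: reduce from $\calT^\theta_{s,w}=\calL^\theta_{s,w}\calL^{-\theta}_{s,w}$ to the single operator $\calL^{\pm\theta}_{s,w}$, then bound $\sum_a|e^{i\pi a\theta}-1|\,a^{-2\Re(s)}$ by splitting at $a\sim 1/|\theta|$, using the linear bound below the cutoff and the trivial bound above it. Your spelled-out telescoping/Leibniz reduction and the remark about absorbing the $\log a$ factor from the $s$-derivative are just more explicit versions of steps the paper dispatches in one line.
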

\begin{proof}
Since $\calT^\theta_{s,w}=\calL^\theta_{s,w}\calL^{-\theta}_{s,w}$ it is enough to prove these bounds for $\calL^{\theta}_{s,w}$.
For $\Re(s)\geq c>3/4$ for any $g\in \calB$ with $\norm{g}=1$ we can bound
\begin{eqnarray*}
|\pd{}{s}(\calL^\theta_{s,0}-\calL^0_{s,0})g(z)|&=&\left|\sum_{a=1}^\infty (e^{i\theta a}-1) g(\frac{1}{a+z})\log(\frac{1}{a+z})(\frac{1}{a+z})^{2s}\right|\\
&\lesssim & \norm{g}\sum_{a=1}^\infty|e^{i\theta a}-1|a^{-3/2}\\
&\lesssim & |\theta| \sum_{a\leq |\theta|^{-1}} a^{-1/2}+\sum_{a\geq  |\theta|^{-1}} a^{-3/2}\lesssim  |\theta|^{1/2}.
\end{eqnarray*}
Since this holds for all $z\in D_{3/2}(1)$ and all $g\in \calB$ with $\norm{g}=1$ indeed
$\norm{\pd{}{s}(\calL^\theta_{s,0}-\calL^0_{s,0})}\lesssim |\theta|^{1/2}$.
The arguments for $\calL^\theta_{s,0}$ and $\pd{}{w}\calL^\theta_{s,w}$ are identical.
\end{proof}
\begin{proof}[Proof of proposition \ref{p:modulus}]
Since $Z_\theta(s,0)$ has a simple zero at $s_\theta$, the operator $\calT^\theta_{s_\theta,0}$ has $\lambda=1$ as a simple isolated eigenvalue.
Thinking of $\calT^\theta_{s,w}$ as a perturbation $\calT^\theta_{s_\theta,0}$ which is analytic in $s,w$ and continuous in $\theta$, we find that there is a function $\lambda_\theta(s,w)$ (analytic in $s,w$ and continuous in $\theta$) such that for all sufficiently small $\theta$, the function
$\tilde{Z}_\theta(s,w)=\frac{Z_\theta(s,w)}{1-\lambda_\theta(s,w)}$ is analytic and non zero in some neighborhood of $(s,w)=(s_\theta,0)$.
We can now write
$$\tilde{\eta}_\theta(s)=\frac{-1}{1-\lambda_\theta(s,0)}\pd{\lambda_\theta}{w}(s,0)-\frac{\pd{\tilde{Z}_\theta}{w}(s,0)}{\tilde{Z}_\theta(s,0)}.$$
and since  $\frac{\pd{\tilde{Z}_\theta}{w}(s,0)}{\tilde{Z}_\theta(s,0)}$ is analytic in some neighborhood of $s=s_\theta$ we get that
\[R_f(\theta)=\frac{\pd{\lambda_\theta}{w}(s_\theta,0)}{\pd{\lambda_\theta}{s}(s_\theta,0)}.\]

Next, from continuity of $\calT_{s,w}^\theta$ (in all parameters), for $|s-1|,|w|,|\theta|$ sufficiently small the projection operator to the (one dimensional) eigenspace of $\lambda_\theta(s,w)$  is given by
\[\calP^\theta_{s,w}=\frac{1}{2\pi i}\int_{|\xi-1|=r_0}(\calT^\theta_{s,w}-\xi)^{-1}d\xi,\]
for some fixed small radius $r_0$ (for which $\lambda_\theta(s,w)$ is the only eigenvalue of $\calT_{s,w}^\theta$ satisfying $|\lambda-1|<r_0$).
Then, for any $g\in \calB,\;z\in D_{3/2}(1)$ and $\theta, w,s-1$ sufficiently small we have
\[\calT^\theta_{s,w}\calP^\theta_{s,w}g(z)=\lambda_\theta(s,w)\calP^\theta_{s,w}g(z).\]

In particular, let $g_0\in \calB$ denote the normalized eigenfunction of $\calT^0_{1,0}$ with eigenvalue $1$, and $z_0\in D_{3/2}(1)$ any point satisfying $g_0(z_0)>\tfrac{1}{2}$. Lemma \ref{l:modulus} implies that $\norm{\calP^\theta_{s,w}-\calP^0_{1,0}}=O(|\theta|^{1/2}+|w|+|s-1|)$ so that for $\theta, w,s-1$ sufficiently small $\calP^\theta_{s,w}g_0(z_0)\neq 0$ and we have
\begin{equation}
\label{e:lambda}\lambda_\theta(s,w)=\frac{\calT^\theta_{s,w}\calP^\theta_{s,w}g_0(z_0)}{\calP^\theta_{s,w}g_0(z_0)}.
\end{equation}
Taking derivatives of \eqref{e:lambda} with respect to $w$ and $s$ respectively %we get
%\begin{equation}\label{e:dw}\pd{\lambda_\theta}{w}(1,0)=
%\frac{\pd{}{w}_{|_{w=0}}(\calT^\theta_{1,w}\calP^\theta_{1,w}g(z_0))}{\calP^\theta_{1,0}g(z_0)}-
%\frac{\calT^\theta_{1,0}\calP^\theta_{1,0}g(z_0)\pd{}{w}_{|_{w=0}}(\calP^\theta_{1,w}g(z_0))}{(\calP^\theta_{1,0}g(z_0))^2}.\,
%\end{equation}
%and
%\begin{equation}\label{e:ds}\pd{\lambda_\theta}{s}(1,0)=
%\frac{\pd{}{s}_{|_{s=1}}(\calT^\theta_{s,0}\calP^\theta_{s,0}g(z_0))}{\calP^\theta_{1,0}g(z_0)}-
%\frac{\calT^\theta_{1,0}\calP^\theta_{1,0}g(z_0)\pd{}{s}_{|_{s=1}}(\calP^\theta_{s,0}g(z_0))}{(\calP^\theta_{1,0}g(z_0))^2}.
%\end{equation}
and using Lemma \ref{l:modulus}, we get that
%$$\norm{\pd{}{s}_{|_{s=1}}(\calT^\theta_{s,0}\calP^\theta_{s,0})-\pd{}{w}_{|_{s=1}}(\calT^0_{s,0}\calP
%^0_{s,0})}=O(|\theta|^{1/2}),$$%O(|\theta|\log(\tfrac{1}{|\theta})^2),$$
 $$|\pd{\lambda_\theta}{s}(s_\theta,0)-\pd{\lambda_0}{s}(s_\theta,0)|=O(\sqrt{|\theta|}),$$
 and similarly
$$|\pd{\lambda_\theta}{w}(s_\theta,0)-\pd{\lambda_0}{w}(s_\theta,0)|=O(\sqrt{|\theta|}).$$
Also, since $\calT^0_{s,w}$ (and hence $\lambda_0(s,w)$) is analytic in $s$ and $w$ we have
$$\pd{\lambda_0}{s}(s_\theta,0)=\pd{\lambda_0}{s}(1,0)+O(|s_\theta-1|)=\pd{\lambda_0}{s}(1,0)+O(|\theta|),$$
and similarly
$$\pd{\lambda_0}{w}(s_\theta,0)=\pd{\lambda_0}{w}(1,0)+O(|\theta|).$$
Combinimg these estimates we get that
%\begin{eqnarray*}
%|\pd{\lambda_\theta}{w}(s_\theta,0)-\pd{\lambda_0}{w}(1,0)|&\leq& \left|\pd{\lambda_\theta}{w}(s_\theta,0)-\pd{\lambda_\theta}{w}(1,0)\right|\\
%&+&\left|\pd{\lambda_\theta}{w}(1,0)-\pd{\lambda_0}{w}(1,0)\right|=O(|\theta|^{1/2})\\
%%&\lesssim& |\theta|\log(\tfrac{1}{|\theta|})
%\end{eqnarray*}
%and similarly
\begin{eqnarray*}
\pd{\lambda_\theta}{w}(s_\theta,0)=\pd{\lambda_0}{w}(1,0)+O(\sqrt{|\theta|}),\\ \pd{\lambda_\theta}{s}(s_\theta,0)=\pd{\lambda_0}{s}(1,0)+O(\sqrt{|\theta|}),\end{eqnarray*}
and hence
\[R_f(\theta)=\frac{\pd{\lambda_\theta}{w}(s_\theta,0)}{\pd{\lambda_\theta}{s}(s_\theta,0)}=\frac{\pd{\lambda_0}{w}(1,0)}{\pd{\lambda_0}{s}(1,0)}+ O(\sqrt{|\theta|})=R(0)+O(\sqrt{|\theta|}).\]
\end{proof}
\subsection{Proof of Propositoin \ref{t:etaodd}}
Fix a test function $f\in \calB$ and consider the transfer opertor $\calL_{s,w}=\calL_{s,w}^0$ as in \eqref{e:transfer}.
Define two Zeta function by
\[Z_{f}^\pm(s,w)=\det(1\pm\calL_{s,w}),\]
and let
\[Z^{\mathrm{odd}}_{f}(s,w)=\frac{Z_{f}^-(s,w)}{Z_{f}^+(s,w)}.\]
This function is meromorphic for $\Re(s)>\frac{1}{2}$ and
using \eqref{e:La} to compute $\Tr\big((\calL_{s,w})^n\big)$ we get that for $\Re(s)>1$ it is given by
\[Z^{\mathrm{odd}}_{f}(s,w)=\exp\left(-2\mathop{\sum_{n=1}^\infty}_{\mathrm{odd}}\frac{1}{n}\sum_{\bfa\in\bbN^n}
\frac{e^{-s\ell(\bfa)}}{1+e^{-\ell(\bfa)}}\exp\big(w\sum_{j=1}^n f(\T^j[\overline{a_1,\ldots,a_n}])\big)\right).\]
%Indeed,
%\[\eta_f(s)=2\sum_{n\;\mathrm{odd}}\frac{1}{n}\sum_{\bfa\in \bbN^n}e^{-s\ell(\bfa)}\sum_{j=1}^n f(\T^j[\overline{a_1,\ldots, a_n}]).\]
For $\bfa\in \bbN^n$ with $n$ odd we have that $x=[\overline{a_1,\ldots,a_n}]$ is in $Q_{\mathrm{odd}}$ (because its minimal expansion must also be odd). Moreover in this case $n=k|Q_x|$ for some odd $k\in \bbN$ and $\ell(\bfa)=\frac{k\ell(x)}{2}$ (recall that $\ell(x)$ is computed using the minimal even expansion). As before, if $\tilde{\eta}^{\mathrm{odd}}_f(s)=-\pd{}{w}_{|_{w=0}}\log(Z^{\mathrm{odd}}_f(s,w))$ then
\begin{eqnarray*}
\tilde{\eta}^{\mathrm{odd}}_f(s)+\tilde{\eta}^{\mathrm{odd}}_f(s+1) &=& 2\sum_{x\in Q_{\mathrm{odd}}}\sum_{k\;\mathrm{odd}}e^{-sk\ell(x)/2}\frac{1}{|Q_x|}\sum_{y\in Q_x} f(y)\\
&=&2 \sum_{y\in Q_{\mathrm{odd}}}f(y)\sum_{k\;\mathrm{odd}}e^{-sk\ell(x)/2}=\eta_f^{\mathrm{odd}}(s)
\end{eqnarray*}

As in the proof of Proposition \ref{p:poles}, we see that $\eta_f^{\mathrm{odd}}(s)$ has a meromorphic continuation to $\Re(s)>\frac{1}{2}$ and its only poles are at the zeros and poles of
$$Z^{\mathrm{odd}}_{f}(s,0)=\frac{\det(1-\calL_{s})}{\det(1+\calL_{s})}=\frac{Z^-(s)}{Z^+(s)}.$$

We recall that the Selberg Zeta function,
\[Z(s)=\prod_{\gamma\in \calC}\prod_{k=1}^\infty (1-e^{-(s+k)\ell(\gamma)}),\]
has a simple zero at $s=1$ and no other zeros in the half plane $\Re(s)>\frac{1}{2}$.
In \cite{Mayer91}, Mayer showed that the Selberg Zeta function has the factorization
$Z(s)=Z^-(s)Z^+(s)$ (this factorization was further studied in \cite{Efrat93} and is related to odd and even spectrum of the Laplacian).
Consequently, $Z^{\mathrm{odd}}_{f}(s,0)=\frac{Z^-(s)}{Z^+(s)}$ has an analytic continuation to $\Re(s)>\frac{1}{2}$ with no zeros except a simple zero at $s=1$ (it was also shown in \cite{Mayer76} that $1$ is an eigenvalue of $\calL_1$ so that $Z^-(1)=0$).

We thus see that $\eta^{\mathrm{odd}}_f(s)$ has a meromorphic continuation to $\Re(s)>\frac{1}{2}$ with a single simple pole at $s=1$.
Finally, let $\lambda_0(s,w)$ denote the eigenvalue of $\calL_{s,w}$ obtained as the analytic continuation of the simple eigenvalue $\lambda_0(1,0)=1$ of $\calL_{1,0}$. Then, as before we have that
\[\mathop{\mathrm{Res}}_{s=1}\eta_f^{\mathrm{odd}}(s)=\frac{\pd{\lambda_0}{w}(1,0)}{\pd{\lambda_0}{s}(1,0)},\]
which by \cite[Proposition 2]{Pollicott86} is equal to $\frac{6\log(2)}{\pi^2}\int fd\nu$.

%We can thus write
%\[\eta_f(s)=\frac{A}{1-s}+\phi(s),\]
%with $\phi(s)$ holomorphic in $\Re(s)>\frac{1}{2}$ and $A=\frac{6\log(2)}{\pi^2}\int fd\nu$.
%The Wiener-Ikehara Tauberian theorem now implies that
%\[2 \sum_{k\;\mathrm{odd}}\mathop{\sum_{y\in Q_{\mathrm{odd}}}}_{\frac{k\ell(x)}{2}\leq T}f(y)\sim \frac{6\log(2)}{\pi^2}\left(\int fd\nu\right) e^T,\]
%and hence
%\[\mathop{\sum_{y\in Q_{\mathrm{odd}}}}_{\ell(x)\leq T}f(y)\sim \frac{3\log(2)}{\pi^2}\left(\int fd\nu\right) e^{T/2}.\]

\begin{rem}
In this proof we used the fact that the Selberg Zeta function has no zeros in $\Re(s)>\tfrac{1}{2}$ except at $s=1$, which follows from the Selberg trace formula. We note that all we actually need in order to prove Theorem \ref{t:Qodd} is that it has no other zeros on the line $\Re(s)=1$, which can be proved using only the dynamical approach of \cite{Mayer76}.
\end{rem}

%----------------------------------------------------------------
%GATHER{E:/TexFiles/Bib/Mybib.bib}%Mybib.bib}%   % For Gather Purpose Only
%\bibliographystyle{amsalpha}
%\bibliography{EAdachiSunada87:/TexFiles/Bib/Mybib}%C:/Users/Dubi/Documents/Work/Workingtex/Mybib}%
%*********************************************************************

\def\cprime{$'$} \def\cprime{$'$}
\providecommand{\bysame}{\leavevmode\hbox to3em{\hrulefill}\thinspace}
\providecommand{\MR}{\relax\ifhmode\unskip\space\fi MR }
% \MRhref is called by the amsart/book/proc definition of \MR.
\providecommand{\MRhref}[2]{%
  \href{http://www.ams.org/mathscinet-getitem?mr=#1}{#2}
}
\providecommand{\href}[2]{#2}

\end{document}